\newcommand{\nwc}{\newcommand}
\newcommand*{\Scale}[2][4]{\scalebox{#1}{$#2$}}%
\nwc{\aaa}{\mathcal{F}}
\nwc{\aap}{\mathcal{F}_{P}}
\nwc{\al}{\alpha}
\nwc{\C}{\mathbb{C}}
\nwc{\cb}{\overline{C}}
\nwc{\ccc}{\mathfrak{c}}
\nwc{\ch}{\widehat{C}}
\nwc{\cin}{\textbf{(v)}}
\nwc{\cl}{C'}
\nwc{\cod}{{\rm cod}}
\nwc{\cp}{\mathcal{C}_{P}}
\nwc{\cpll}{\mathfrak{c}_{P'}}
\nwc{\ct}{\widetilde{C}}
\nwc{\dd}{\mathcal{L}}
\nwc{\ddd}{\mathfrak{d}}
\nwc{\ddl}{\mathcal{L}'}
\nwc{\dlp}{\delta_{P}}
\nwc{\doi}{\textbf{(ii)}}
\nwc{\enq}{$$}
\nwc{\fl}{\flushleft}
\nwc{\fff}{\mathcal{F}}
\nwc{\ffp}{\mathcal{F}_{P}}
\nwc{\ffq}{\mathcal{F}_{Q}}
\nwc{\ffl}{\mathcal{F}'}
\nwc{\G}{\mathcal{G}}
\nwc{\Ga}{\Gamma}
\nwc{\gtl}{\widetilde{g}}
\nwc{\gon}{{\rm gon}}
\nwc{\hra}{\hookrightarrow}
\nwc{\hua}{h^{1}(C,\aaa )}
\nwc{\kk}{{\bf k}}
\nwc{\llb}{\mathcal{L}}
\nwc{\mb}{\mathbb}
\nwc{\mc}{\mathcal}
\nwc{\mm}{\mathfrak{m}}
\nwc{\mmp}{\mathfrak{m}_{P}}
\nwc{\mpd}{\mathfrak{m}_{P}^{2}}
\nwc{\nn}{\mathbb{N}}
\nwc{\ob}{\overline{\mathcal{O}}}
\nwc{\obr}{\mathcal{O}^{*}}
\nwc{\obp}{\overline{\mathcal{O}}_P}
\nwc{\och}{\mathcal{O}_{\hat{C}}}
\nwc{\oh}{\hat{\mathcal{O}}}
\nwc{\ohp}{\hat{\mathcal{O}}_{P}}
\nwc{\ol}{\mathcal{O}'}
\nwc{\oma}{\Omega (\mathfrak{a})}
\nwc{\omo}{\Omega (\mathcal{O})}
\nwc{\oo}{\mathcal{O}}
\nwc{\op}{\mathcal{O}_P}
\nwc{\opc}{\mathcal{O}_{P,C}}
\nwc{\oph}{\hat{\mathcal{O}}_{P}}
\nwc{\opl}{\mathcal{O}_{P}'}
\nwc{\oplc}{\mathcal{O}_{P,C}'}
\nwc{\opll}{\mathcal{O}_{P'}}
\nwc{\opt}{\tilde{\mathcal{O}}_{P}}
\nwc{\optt}{{\mathcal{O}}_{\tilde{P}}}
\nwc{\oq}{\mathcal{O}_{Q}}
\nwc{\oqt}{\tilde{\mathcal{O}}_{Q}}
\nwc{\ot}{\widetilde{\mathcal{O}}}
\nwc{\overop}{\bar{\oo}_{P}}
\nwc{\pb}{\overline{P}}
\nwc{\pbb}{P^{*}}
\nwc{\pbi}{\overline{P_{i}}}
\nwc{\pbr}{\overline{P_{r}}}
\nwc{\pgmd}{\mathbb{P}^{g+2}}
\nwc{\pgmu}{\mathbb{P}^{g+1}}
\nwc{\ph}{\hat{P}}
\nwc{\pp}{\mathbb{P}}
\nwc{\prv}{\noindent\textbook{Proof}:}
\nwc{\pt}{\widetilde{P}}
\nwc{\ptl}{\tilde{P}}
\nwc{\pum}{\mathbb{P}^{1}}
\nwc{\qh}{\hat{Q}}
\nwc{\qtl}{\tilde{Q}}
\nwc{\qua}{\textbf{(iv)}}
\nwc{\ra}{\rightarrow}
\nwc{\rh}{\hat{R}}
\nwc{\sei}{\textbf{(vi)}}
\nwc{\sep}{\beq\ast\ \ast\ \ast\enq}
\nwc{\sig}{\sigma}
\nwc{\Sig}{\Sigma}
\nwc{\ssp}{S_{P}}
\nwc{\sss}{{\rm S}}
\nwc{\sys}{\mathcal{L}}
\nwc{\tre}{\textbf{(iii)}}
\nwc{\um}{\textbf{(i)}}
\nwc{\vpb}{v_{\overline{P}}}
\nwc{\vtxp}{\widetilde{V}_{x,P}}
\nwc{\vxp}{V_{x,P}}
\nwc{\vv}{\mathcal{W}}
\nwc{\vvp}{\mathcal{W}_{P}}
\nwc{\val}{\match{V}}
\nwc{\wh}{\hat{\omega}}
\nwc{\whp}{\hat{\omega}_{P}}
\nwc{\woch}{\omega\cdot\mathcal{O}_{\hat{C}}}
\nwc{\woh}{\omega\cdot\hat{\mathcal{O}}}
\nwc{\ww}{\omega}
\nwc{\wwb}{\omega^{*}}
\nwc{\wwct}{\omega _{\widetilde{C}}}
\nwc{\wwh}{\widehat{\omega}}
\nwc{\wwhp}{\widehat{\omega}_P}
\nwc{\wwp}{\omega _{P}}
\nwc{\wwt}{\widetilde{\omega}}
\nwc{\wwtp}{\widetilde{\omega}_P}
\nwc{\zz}{\mathbb{Z}}
\newtheorem{coro}{Corollary}[section]
\newtheorem{conj}[coro]{Conjecture}
\newtheorem{dfn}[coro]{Definition}
\newtheorem{lemma}[coro]{Lemma}
\newtheorem{prop}[coro]{Proposition}
\newtheorem{rem}[coro]{Remark}
\newtheorem{thm}[coro]{Theorem}
\newtheorem{ex}[coro]{Example}
\let \fl=\flushleft
\let \sub=\subset
\let \be=\beta
\let \al=\alpha
\let \pr=\prime
\let \la=\lambda
\begin{document}

\title{Certified 
Severi dimensions for hyperelliptic and supersymmetric cusps}

\author{Ethan Cotterill}
\address{IMECC, Unicamp,
Rua S\'ergio Buarque de Holanda, 651,
13083-859, Campinas SP, Brazil}
\email{cotterill.ethan@gmail.com}

\author{Vin\'icius Lima}
\address{Departamento de Matem\'atica, ICEx, UFMG,
Av. Ant\^onio Carlos 6627,
30123-970 Belo Horizonte MG, Brazil}
\email{viniciuslaralima@gmail.com}

\author{Renato Vidal Martins}
\address{Departamento de Matem\'atica, ICEx, UFMG,
Av. Ant\^onio Carlos 6627,
30123-970 Belo Horizonte MG, Brazil}
\email{renato@mat.ufmg.br}

\author{Alexandre Reis}
\address{Departamento de Matem\'atica, ICEx, UFMG,
Av. Ant\^onio Carlos 6627,
30123-970 Belo Horizonte MG, Brazil}
\email{alexandre.silva.reis@gmail.com}

\subjclass{Primary 14H20, 14H45, 14H51, 20Mxx}

\keywords{linear series, rational curves, singularities, semigroups}

\begin{abstract}
In a previous paper, the first three authors formulated a precise conjecture about the dimension of the {\it generalized Severi variety} $M^n_{d,g; {\rm S}, {\bf k}}$ of degree-$d$ holomorphic maps $\mb{P}^1 \ra \mb{P}^n$ whose images' singularities are singleton cusps with value semigroups ${\rm S}$ and ramification profiles ${\bf k}$. In this paper, we prove that an adjusted form of the conjecture holds for generic profiles ${\bf k}$ associated with two distinguished (infinite) classes of semigroups ${\rm S}$.
\end{abstract}

\maketitle
\tableofcontents

\section{Introduction}

The geometry of complex algebraic curves $C$ in a given ambient variety $X$ depends on the dimension of $X$, and on the positivity properties of (the canonical bundles of) $C$ and $X$.
Joe Harris' proof \cite{H1} of the irreducibility of the Severi variety $M^2_{d,g}$ of plane curves of fixed degree $d$ and genus $g$ implies that every curve indexed by a point of $M^2_{d,g}$ lies in the closure of the sublocus of $M^2_{d,g}$ that parameterizes {\it $g$-nodal} rational curves; and analogous irreducibility results are known for some (though not all) other ambient toric surfaces $X$.

\medskip
However, this phenomenon fails when one replaces $M^2_{d,g}$ by $M^n_{d,g}$, the ``Severi variety" of degree-$d$ morphisms $\mb{P}^1 \ra \mb{P}^n$ of degree $d$ and arithmetic genus $g$. Indeed, as we saw in \cite{CLM}, it is easy to construct examples of Severi varieties $M^n_{d,g}$ with components of strictly-larger dimension than that of the $g$-nodal locus for every $n \geq 3$. Our construction was predicated on producing large substrata 
$\mc{V}_{\bf k}$ associated with judiciously chosen semigroups ${\rm S}$ and ramification profiles $\bf{k}$. We also conjectured a general combinatorial formula for the (co)dimension of $\mc{V}_{\bf k}$ as a function of ${\rm S}$ and ${\bf k}$.

\medskip
In the present paper we prove an adjusted version of our Severi dimensionality conjecture for two interesting classes of examples, namely when ${\rm S}$ is a hyperelliptic (resp., {\it supersymmetric}) semigroup and ${\bf k}$ consists of the first $n$ consecutive even numbers (resp., a minimal generating set for ${\rm S}$). Our choice of examples is motivated by our reformulation of the dimensionality conjecture in terms of the so-called {\it Betti elements} of the monoid generated by the components of ${\bf k}$ and the minimal generators of ${\rm S}$. Roughly speaking, the hyperelliptic strata we study are associated with a maximal number of Betti elements; the supersymmetric strata, on the other hand, are associated with singleton Betti sets. To prove the dimensionality conjecture in each of these two cases, we implement an algorithm that allows us to check, order by order, whether or not a given positive integer arises as the valuation of an element of the local algebra of a hyperelliptic or supersymmetric cusp with prescribed ramification profile. An upshot of our analysis is that the corresponding Severi varieties are always unirational.

\medskip
In section~\ref{certifying_hyperelliptic_conditions} we prove the following result about Severi varieties of unicuspidal rational curves with hyperelliptic cusps. 

\begin{thm}(= Theorem~\ref{theo2.1} and Corollary~\ref{hyperelliptic_severi_unirationality})
Given positive integers $d$, $g$, and $n$ for which $n \leq 2g \leq d$, the subvariety $\mathcal{V} \subset M^n_{d,g}$ parameterizing rational curves with a unique singularity of hyperelliptic cuspidal type is of codimension
\[{\rm cod}(\mathcal{V},M^n_d) = (n-1)g\]
and the generic hyperelliptic Severi stratum $\mc{V}_{(2,4,\dots,2n)}$ is unirational.
\end{thm}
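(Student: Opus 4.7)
The plan is to certify both the codimension and the unirationality by a local analysis at the hyperelliptic cusp. Fix a point $P \in \pp^1$ mapping to the cusp and a local parameter $t$ at $P$. After normalizing the morphism $\pp^1 \ra \pp^n$ in a neighborhood of $P$, it is recorded by $n$ power series $f_1, \ldots, f_n \in \mb{C}[[t]]$ satisfying $\text{ord}_t f_i = 2i$, as dictated by the ramification profile $(2,4,\ldots,2n)$. The value semigroup of the cusp is then
\[
\sss = \{\text{ord}_t\, p(f_1, \ldots, f_n) : p \in \mb{C}[x_1,\ldots,x_n]\},
\]
and the hyperelliptic condition $\sss = \langle 2, 2g+1 \rangle$ amounts to the combinatorial requirement that $\sss$ contain $2$ (automatic, since powers of $f_1$ realize every positive even integer) and $2g+1$ (an open non-vanishing condition), while excluding the $g$ odd integers $1, 3, \ldots, 2g-1$.

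The heart of the proof is the order-by-order algorithm outlined in the introduction. Expand
\[
f_i = t^{2i}\bigl(1 + \sum_{\ell \ge 1} a_{i,\ell}\, t^\ell\bigr),
\]
and grade monomials by their leading $t$-weight $w(f^\alpha) := \sum 2i\,\alpha_i$, which is always even. An element of odd valuation can therefore only arise by cancellation among monomials of a common even weight $w$, followed by a residual lower-order correction of order $w+1$ or more. For each odd target $k \in \{3, 5, \ldots, 2g-1\}$ (the case $k = 1$ being vacuous), I would translate the requirement ``no polynomial combination has valuation $k$'' into a finite system of polynomial conditions on the coefficients $a_{i,\ell}$, using a triangular weight-by-weight enumeration of the relevant cancellation combinations and the corresponding constraints they force at the next order.

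The main obstacle will be verifying that the aggregate system of constraints has rank exactly $(n-1)g$, neither more (which would contradict the claimed codimension) nor less (which would either fail to force $\sss$ to be hyperelliptic or yield an under-count). This rank computation rests on identifying, stage by stage, how many of the conditions produced are genuinely new versus consequences of cancellations already enforced at earlier stages; in the language of the introduction, it corresponds to a count of the Betti elements of the monoid generated by $2,4,\ldots,2n$ together with the minimal generators of $\sss$. The hypothesis $n \le 2g \le d$ is precisely what makes the relevant Betti-theoretic counts come out correctly, and the corresponding combinatorial bookkeeping is where the bulk of the technical work lives.

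With the $(n-1)g$ codimension established, unirationality of the generic stratum $\mc{V}_{(2,4,\ldots,2n)}$ follows from the same triangular structure. At each step of the algorithm one may solve the newly imposed constraints explicitly for a distinguished coefficient $a_{i,\ell}$ as a rational function of the previously free coefficients. Accumulating these solutions exhibits $\mc{V}_{(2,4,\ldots,2n)}$ birationally as the image of an affine space under a morphism of algebraic varieties, which is exactly unirationality. Since $\mc{V}_{(2,4,\ldots,2n)}$ is the generic (open-dense) stratum of $\mc{V}$, the codimension calculation transfers to $\mc{V}$ itself, completing both assertions of the theorem.
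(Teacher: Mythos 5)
Your setup is the same as the paper's, and your strategy statement is accurate as far as it goes --- but at the decisive point the proposal restates the problem rather than solving it. You write that ``the main obstacle will be verifying that the aggregate system of constraints has rank exactly $(n-1)g$'' and that this ``combinatorial bookkeeping is where the bulk of the technical work lives.'' That bookkeeping \emph{is} the theorem. The paper's entire proof consists of making it work: one organizes the candidate condition-producing elements of $\op$ by partitions $\lambda \in \mathcal{P}_\ell^{\circ}$ of weights, defines the recursive corrections $F^{*}_{\lambda,0}=f_\lambda$, $F^{*}_{\lambda,r}=F^{*}_{\lambda,r-1}-a(\lambda,r-1)f_1^{i_\lambda+r-1}$, and then proves the key structural identity (Lemma~\ref{lemma2.2}, by induction on $r$)
\[
F^{*}_{\lambda,r}=F_{i_1,r}^{*}\, f_1^{i_{\lambda}-i_{1}}+ \sum_{q=2}^{s}\left( f_{\lambda_{q-1}}F_{i_q,r}^{*}\, f_1^{i_{\lambda}-i_{\lambda_q}}+f_1^{i_{\lambda}-i_{\lambda_{q-1}}}\sum_{l=1}^{r-1}a(i_q,l)\, F^{*}_{\lambda_{q-1},r-l}\, f_1^{l} \right),
\]
which exhibits every composite-partition correction as a $\mathbb{C}[f_1,\dots,f_n]$-linear combination of the singleton ones $F^{*}_{i,u}$. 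It is exactly this identity that forces every condition indexed by a composite partition to be a consequence of conditions already imposed by singletons $\lambda\in\{2,\dots,n\}$, whence the count $b_P=\sum_{\ell=2}^{n}(n+p+1-\ell)=\sum_{\ell=2}^{n}(g-\ell)$ and the upper bound on the codimension. Your proposal contains no mechanism for establishing this redundancy, and a naive ``weight-by-weight enumeration'' does produce many apparent conditions at each gap (one per partition, as Example~\ref{example2.2} shows), so without the lemma you cannot rule out a strictly larger codimension. The unirationality claim inherits the same gap: solving each new constraint for a distinguished coefficient requires knowing the constraints are of the triangular form $a^{\lambda,(\ell+1)/2-\lambda}_{\ell}=a_{\lambda,\ell}+\cdots$ with $\lambda$ a singleton, which again rests on Lemma~\ref{lemma2.2}.

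Two smaller points. First, the lower bound (``nor less'') is not part of the same rank computation: the paper imports it from \cite[Thm.~2.1]{CLM} as the inequality $b_P \geq \sum_{i=2}^{n}(g-k_i/2)$, giving ${\rm cod}(\mc{V}_{\bf k},M^n_d) \geq (n-1)g+\sum_{i=2}^n(k_i/2-i)$ for every even profile. Second, your final assertion that the codimension ``transfers'' from $\mc{V}_{(2,4,\dots,2n)}$ to $\mc{V}$ because the former is open-dense needs an argument; in the paper it follows from that same inequality, whose right-hand side is uniquely minimized at $k_i=2i$, so that every other hyperelliptic stratum has strictly larger codimension. You should either prove that comparison across profiles or cite it; density alone is not established anywhere in your proposal.
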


In section~\ref{Severi_dimensionality_conjectures}, we revisit and adjust the dimensionality conjecture \cite[Conj. 3.6]{CLM} for Severi varieties of unicuspidal rational curves of type $({\rm S},{\bf k})$ proposed by the first three authors.
Our reformulated conjecture is as follows. 

\begin{conj} (= Conjecture~\ref{reformulated_conjecture}) Given positive integers 
$d$, $g$, and $n$ for which $n \leq 2g \leq d$, and given a ramification profile ${\bf k}=(k_1,\dots,k_n)$ for which the corresponding Severi variety $\mc{V}_{\bf k} \sub M^n_{d,g}$ is nonempty, we have
\begin{equation}\label{general_cod_estimateBis1}
{\rm cod}(\mc{V}_{\bf k},M^n_d) =\sum_{i=1}^n(k_i-i)+\sum_{s\in B} \phi(s)\rho(s)-\sum_{s\in{\bf k}^{\bullet}} \rho(s)- 1
\end{equation}
in which $B$ is the set of Betti elements (see Definition~\ref{Chapman}) of the set of ramification orders $k_i$, $i=1,\dots,n$ together with the minimal generators of $\sss$ less than the conductor $c$; $\rho(s)$ denotes the number of elements of ${\mb N} \setminus {\rm S}$ strictly larger than a given element $s \in {\rm S}$; $\phi(s)$ is the number of factorizations of elements contributed by a given Betti element $s$ that do not arise from strictly smaller Betti elements \footnote{Strictly speaking, this number is relative to a fixed reference factorization associated with the Betti element $s$.}; and ${\bf k}^{\bullet}$ is a distinguished proper subset of those minimal generators less than the conductor $c$ and not in ${\bf k}$ (see the discussion preceding Conjecture~\ref{reformulated_conjecture}). 
\end{conj}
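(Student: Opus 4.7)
The plan is to extend the order-by-order algorithm used for the hyperelliptic and supersymmetric strata to an arbitrary pair $({\rm S}, {\bf k})$. First I would give each term of the conjectured formula a geometric interpretation: the sum $\sum_{i=1}^n (k_i - i)$ captures the expected ramification codimension at a smooth source point of $\mathbb{P}^1$; the Betti contribution $\sum_{s \in B} \phi(s)\rho(s)$ should record the independent conditions imposed by lifting syzygies among the minimal generators of ${\rm S}$ (together with the entries of ${\bf k}$) to the local algebra $\mathcal{O}_P$ of the cusp; and the correction $-\sum_{s \in {\bf k}^{\bullet}} \rho(s) - 1$ accounts for conditions already forced by the prescription of ${\bf k}$, together with a single translation degree of freedom on the source.

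Concretely, I would parameterize maps $\mathbb{P}^1 \to \mathbb{P}^n$ with a single cusp of type $({\rm S}, {\bf k})$ by assigning, in local coordinates $t$ around the cuspidal preimage, a truncated power series $f_g(t)$ to each minimal generator $g$ of ${\rm S}$, with truncation order controlled by the conductor $c$. The stratum $\mathcal{V}_{\bf k}$ is then cut out by requiring that every polynomial combination of the $f_g$ whose naive valuation lies in ${\rm S}$ actually attains that valuation, and that no combination attains a valuation in $\mathbb{N} \setminus {\rm S}$. Running through these constraints in increasing order of valuation, a genuinely new condition appears precisely when a Betti element $s$ produces a syzygy whose leading term would otherwise fall on a gap of ${\rm S}$ above $s$ --- this is the origin of the factor $\rho(s)$ --- and the multiplicity $\phi(s)$ records how many new factorizations $s$ genuinely contributes, relative to those inherited from strictly smaller Betti elements.

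The hard part, and the main obstacle to a fully general argument, is controlling the interaction between distinct Betti elements. When two Betti elements $s_1 < s_2$ share factorization components, the naive tally overcounts conditions at higher orders; the weight $\phi(s)$ is designed to absorb this, but certifying that the cancellation occurs cleanly seems to demand either a filtration / spectral-sequence argument whose $E_1$-page is the naive Betti contribution and whose higher differentials implement the overlaps, or else a degeneration/interpolation argument connecting the ``maximal'' (hyperelliptic) and ``singleton'' (supersymmetric) Betti regimes already established. Once this combinatorics is under control, the role of ${\bf k}^{\bullet}$ should be pinned down by matching against the two proven families and by computer-algebra experiments with three- and four-generator semigroups; only after that structural picture is validated would I attempt a uniform derivation of the codimension formula.
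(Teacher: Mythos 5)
The statement you were asked to prove is Conjecture~\ref{reformulated_conjecture}: the paper itself offers \emph{no proof} of it, establishing it only in two special families --- hyperelliptic semigroups with profile $(2,4,\dots,2n)$ (Theorem~\ref{theo2.1}) and supersymmetric semigroups $\langle ab,ac,bc\rangle$ with ${\bf k}=(ab,ac,bc)$ (Theorem~\ref{Supersymmetric}). Your proposal is likewise not a proof but a program, and you say so yourself. The mechanism you describe --- truncated power series attached to generators, an order-by-order sweep through valuations, a new condition whenever a coefficient at a gap order is forced to vanish, each Betti element $s$ weighted by $\rho(s)$ and a multiplicity recording genuinely new factorizations --- is essentially the paper's own heuristic (compare the remark following Conjecture~\ref{reformulated_conjecture}, where reducible decompositions are shown to contribute no new conditions, and the discussion of how elements of ${\bf k}^*$ ``block'' the iteration). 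What is missing, in your sketch as in the general case, is the certification step: a proof that the conditions so enumerated are algebraically independent and exhaustive. In the two proven cases this is done by explicit recursions --- Lemma~\ref{lemma2.2}, which writes $F^{*}_{\lambda,r}$ for an arbitrary partition $\lambda$ as a $\mathbb{C}[f_1,\dots,f_n]$-linear combination of the singleton $F^{*}_{i,u}$, and the identities $C^{\alpha_i}_{j}=C^{\alpha_1}_{j-j_{i-1}}$ in the supersymmetric proof --- and nothing in your outline substitutes for them.

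Two further concrete points. First, your gloss of $\phi(s)$ as ``factorizations not arising from strictly smaller Betti elements'' matches the informal statement of the conjecture but not the operative definition, which is matroid-theoretic: $\phi(b)=\psi(b)-1-\#\{i\,|\,b=b(i)\}$, computed from the circuits of the vector matroid on difference vectors of factorization representatives, with the circuit corrections equivalently packaged in the syzygetic defect $D({\rm S},{\bf k})$ of Conjecture~\ref{reformulated_conjecture_bis}. Your proposed filtration/spectral-sequence whose $E_1$-page is the naive Betti tally is exactly the missing device that would have to implement these circuit cancellations, but you give no construction of it; and a degeneration between the Betti-maximal (hyperelliptic) and Betti-singleton (supersymmetric) regimes would at best interpolate between the two proven endpoints, not certify the formula for arbitrary $({\rm S},{\bf k})$. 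Second, ``pinning down ${\bf k}^{\bullet}$ by computer-algebra experiments'' is calibration rather than derivation: the paper defines ${\bf k}^{\bullet}$ precisely, via the threshold $m$ and the criterion that either $B'\cap(k_{j-1},k_j)\neq\emptyset$ or $\sum_{b<k_j}\phi(b)>j-m$, and any proof must derive, not fit, that definition from the blocking phenomenon. In short, your outline is faithful to the intended mechanism but closes none of the open steps; the statement remains a conjecture both in the paper and after your proposal.
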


Betti elements are precisely those elements that control the factorizations of elements in ${\rm S}$ as nonnegative linear combinations of minimal generators and elements of the ramification profile. Zeroing in on these is also practically useful; indeed, the new conjecture posits that to compute the codimension of $\mc{V}_{\bf k}$ we may restrict our attention to those elements of $\sss$ having at least two 
decompositions whose associated factorization vectors are orthogonal. 

\medskip
Finally, Section~\ref{supersymmetric_semigroups} is devoted to the case of supersymmetric semigroups with supersymmetric ramification profiles. We prove the following result.

\begin{thm} (= Theorem~\ref{Supersymmetric})
Let $\mathcal{V}_{\bf k} \subset M^3_{d,g; {\rm S}}$ be the subvariety of maps $f: \mb{P}^1 \rightarrow \mb{P}^3$ with a unique singularity that is cuspidal with semigroup ${\rm S}=\langle ab,ac,bc \rangle$ and ramification profile ${\bf k}=(ab,ac,bc)$, where $a$, $b$, and $c$ are pairwise relatively prime positive integers. 
Assume $d=\deg(f) \geq 
2g$; then
\begin{equation}\label{general_cod_estimateBis}
{\rm cod}(\mc{V}_{\bf k},M^3_d) 
=2\rho(abc)+ab+ac+bc-7
\end{equation}
and $\mc{V}_{\bf k}$ is unirational.
\end{thm}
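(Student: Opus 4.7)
The plan is to apply the order-by-order algorithm outlined in the introduction to the supersymmetric semigroup ${\rm S}=\langle ab,ac,bc\rangle$ with ramification profile ${\bf k}=(ab,ac,bc)$, extracting the codimension by counting the independent conditions that appear at each valuation. To set the stage I would parameterize a neighborhood of $\mathcal{V}_{\bf k}$ in $M^3_d$ by maps $f:\mathbb{P}^1\to\mathbb{P}^3$ sending $\infty\mapsto P$, written as $(f_0,f_1,f_2,f_3)$ with $f_0$ normalized and $f_1,f_2,f_3$ polynomials having pole orders exactly $ab$, $ac$, $bc$ at $\infty$. The remaining coefficients of the $f_i$ serve as affine coordinates. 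This automatically realizes the three ramification orders, contributing $\sum_{i=1}^{3}(k_i-i)=ab+ac+bc-6$ to the codimension.

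I would then impose the semigroup condition on $\mathcal{O}_P=\kk[f_1,f_2,f_3]_{(\mathfrak{m}_P)}$ by running through valuations $s$ in increasing order. For each $s\in{\rm S}$ with a unique factorization into the minimal generators, the corresponding monomial in $f_1,f_2,f_3$ generically attains valuation $s$ and no condition arises. The decisive case is the unique Betti element $s=abc$, which admits three factorizations $c\cdot(ab)=b\cdot(ac)=a\cdot(bc)$. There is consequently a two-dimensional family of linear combinations $\lambda_1 f_1^c+\lambda_2 f_2^b+\lambda_3 f_3^a$ whose leading terms cancel, producing elements of $\mathcal{O}_P$ with valuation strictly above $abc$. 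For the value semigroup to equal ${\rm S}$ exactly, the subleading terms of each such combination must avoid every gap $s'\in\mathbb{N}\setminus{\rm S}$ with $s'>abc$. Since there are $\rho(abc)$ such gaps and the two cancellation relations are independent, this contributes $2\rho(abc)$ additional conditions. Together with a single $-1$ from the standard projective normalization, one arrives at the claimed codimension $2\rho(abc)+ab+ac+bc-7$; note that the term $\sum_{s\in{\bf k}^{\bullet}}\rho(s)$ in the general conjecture vanishes here because ${\bf k}$ coincides with the minimal generating set.

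The main obstacle is verifying independence of the $2\rho(abc)$ propagated conditions, which I would handle by exploiting the supersymmetric structure of ${\rm S}$. Because $abc$ is the only Betti element, no further collisions of factorizations appear at higher valuations, and each propagation of a cancellation relation introduces a genuinely new subleading coefficient of $f_1,f_2,f_3$. Thus the system of conditions is upper-triangular with respect to the grading by valuation, so linear independence is automatic and no hidden constraints emerge above the conductor. The same triangular structure proves unirationality: the successive conditions can be solved one at a time for a fresh coefficient as a rational function of the previously-determined parameters, exhibiting $\mathcal{V}_{\bf k}$ as the image of an affine space under a dominant rational map.
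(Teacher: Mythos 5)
Your overall strategy --- running the valuation-by-valuation algorithm, getting $ab+ac+bc-6$ from ramification, two conditions per gap above the unique Betti element $abc$, and the final $-1$ --- matches the paper's proof in outline, and your observation that ${\bf k}^{\bullet}=\emptyset$ here is correct. But there is a genuine gap at the pivotal step: your claim that ``because $abc$ is the only Betti element, no further collisions of factorizations appear at higher valuations'' is false. Uniqueness of the Betti element does not imply unique factorization above $abc$: by Proposition~\ref{1or3factors}, \emph{every} element $s=abc+x\cdot ab+y\cdot ac+z\cdot bc$ below the conductor with $0\leq x\leq c-1$, $0\leq y\leq b-1$, $0\leq z\leq a-1$ admits exactly three factorizations, namely $(c+x,y,z)$, $(x,b+y,z)$, and $(x,y,a+z)$; such an $s$ fails to be a Betti element only because these vectors pairwise overlap, not because factorization is unique. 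At each such element $abc+j_k$ the universal element $F$ acquires three genuinely new free coefficients $\alpha_{k+1}$, $\beta_{k+1}$, $\gamma_{k+1}$ (multiplying $g_k f_1^c$, $g_k f_2^b$, $g_k f_3^a$), and at every subsequent gap the relevant coefficient of $F$ is a linear form in \emph{all} the surviving free parameters $\alpha_1,\beta_1,\alpha_2,\beta_2,\ldots$; a priori each gap beyond $abc+j_1$ therefore imposes four or more conditions rather than two, which would overshoot the claimed total $2\rho(abc)$.

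The heart of the paper's proof is precisely to kill this overcount: it establishes the identities $C^{\alpha_i}_{j}=C^{\alpha_1}_{j-j_{i-1}}$ and $C^{\beta_i}_{j}=C^{\beta_1}_{j-j_{i-1}}$, so that the conditions attached to the newly introduced parameters are verbatim copies of conditions already imposed at earlier gaps and hence vanish automatically, leaving exactly two new conditions per gap. Proving this requires nontrivial relations among the expansion coefficients, e.g.\ $d_{j_1+1,1}=d_{1,1}+e_{1,1}$, which follows from the monomial identity $h_1\,g_1=h_{j_1+1}$ (both sides being the normalized monomial of valuation $abc+j_1+1$). Your ``upper-triangular'' independence argument and the resulting unirationality claim are in the same spirit as the paper's conclusion --- the conditions $C^{\alpha_1}_{\ell_i}$ and $C^{\beta_1}_{\ell_i}$ contain the fresh linear summands $a_{\ell_i}-c_{\ell_i}$ and $b_{\ell_i}-c_{\ell_i}$ --- but that argument only becomes available once the redundancy of the $\alpha_k,\beta_k$-conditions for $k\geq 2$ is established. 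As written, your proposal assumes away exactly the phenomenon that the bulk of the paper's proof is devoted to confronting.
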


\subsection{Conventions}
We work over $\mb{C}$. By {\it rational curve} we always mean a projective curve of geometric genus zero. A {\it cusp} is a unibranch curve singularity. We let $M^n_d$ denote the space of nondegenerate morphisms $f: \mb{P}^1 \ra \mb{P}^n$ of degree $d>0$. Any such morphism is defined by a linear series $(\mathcal{O}_{\mathbb{P}^1}(d),V)$ with $V\subset H^0(\mathcal{O}_{\mathbb{P}^1}(d))$ and $\dim(V)=n+1$, so hereafter we will identify the morphism with its associated set of homogeneous parameterizing polynomials. We let $M^n_{d,g} \subset  M^n_d$ denote the subvariety of morphisms whose images have arithmetic genus $g$.

\medskip
Every cusp $P$ admits a local parameterization $f: t \mapsto (f_0(t), \dots, f_n(t))$ that is dual to a map of rings $\phi: R \ra \mb{C}[[t]]$, where $R:= \mb{C}[x_0,\dots,x_n]$ and $\phi(x_i):=f_i(t)$. Let $v:\mb{C}[[t]] \ra \mb{N}$ denote the standard valuation induced by the assignment $t \mapsto 1$. Let ${\rm S}:=v(\phi(R))$ denote the {\it value semigroup} of $P$; the complement $\mb{N}\setminus{\rm S}$ is finite, and the (local) {\it genus} of the singularity at $P$ is its cardinality $\delta_P:=\#(\mb{N}\setminus{\rm S})$. The (global arithmetic) genus of $C$ is the sum $g=\sum_{P\in C}\delta_P$ of these local contributions.

\medskip
The {\it conductor} $c$ of a numerical semigroup ${\rm S}$ is $\ell$+1, where $\ell \in {\bf N}\setminus {\rm S}$ denotes the largest gap. A numerical semigroup ${\rm S}$ is {\it hyperelliptic} whenever $2 \in {\rm S}$. It is {\it supersymmetric} whenever it is minimally generated by products $\frac{\prod_{i=1}^n a_i}{a_i}$, $i=1,\dots,n$ of pairwise relatively prime positive integers $a_i$.

\medskip
In this paper, we frequently reference the {\it factorizations} of an element of a numerical semigroup ${\rm S}$ as an $\mb{N}$-linear combination of a finite subset $H$ of elements in ${\rm S}$. In practice, $H$ will typically be the union of the components of vectors ${\bf k}$ and ${\bf k}^*$ specified below. In such instances we abusively write $H= {\bf k} \cup {\bf k}^*$.

\begin{rem}
\emph{Every morphism $f=(f_0:f_1:\ldots:f_n): \mb{P}^1 \ra \mb{P}^n$, dehomogenized and suitably renormalized, is of the form
\begin{equation}
\label{equgrl0}
f_i=t^{k_i}+\sum_{\ell=k_i+1}^{d}b_{i,\ell} \, t^{\ell}
\end{equation}
with respect to a uniformizer $t$ for the integral closure of the local ring $\op$ in $P$.}

\emph{Replacing each $f_i$, $i \geq 1$ by its power series representation $f_i/f_0$ in the preimage of a cusp, we write
\begin{equation}
\label{equgrl}
f_i=t^{k_i}+\sum_{\ell=k_i+1}^{\infty}a_{i,\ell} \, t^{\ell}
\end{equation}
in which our re-use of $t$ is abusive. Note that for any $\ell\leq d$, we have 
\begin{equation}
\label{equail}
a_{i,\ell}=b_{i,\ell}+h
\end{equation}
in which $h$ is a polynomial in the $b_{j,k}$, for $j=0,i$ and $1\leq k\leq \ell-1$.} 

\medskip
\emph{We will see that those independent conditions on coefficients imposed by a cusp on the image of $f$ are of the form $a_{i,\ell}-h'=0$, in which $h'$ is a polynomial in the $a_{j,k}$, 
each condition is associated with a distinct variable $a_{i,\ell}$. The equation~\eqref{equail} then allows us to infer algebraic conditions on the coefficients of the original parameterization $f$.
Because we assume that $d \geq c$, each gap less than $c$ contributes conditions of this type; while no conditions are contributed by terms of order strictly greater than $c$ in the expansions of the $f_i$. The upshot is that for the sake of counting algebraically independent conditions, 
we may replace \eqref{equgrl0} by its local analogue \eqref{equgrl}.}

\end{rem}

\section{Certifying conditions imposed by hyperelliptic cusps}\label{certifying_hyperelliptic_conditions}
In this section, we prove the following result conjectured in \cite{CLM} for unicuspidal rational curves with hyperelliptic cusps.

\begin{thm}\label{theo2.1}
Given positive integers $d$, $g$, and $n$ for which $n \leq 2g \leq d$, the subvariety $\mathcal{V} \subset M^n_{d,g}$ parameterizing rational curves with a unique singularity of hyperelliptic cuspidal type is of codimension
\[
{\rm cod}(\mathcal{V},M^n_d) = (n-1)g.
\]
\end{thm}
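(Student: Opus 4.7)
The plan is to work locally at the unique cusp $P$, expanding the map in power series $f_i = t^{k_i} + \sum_{\ell > k_i} a_{i,\ell} t^\ell$ with respect to a uniformizer $t$ for the preimage of $P$ in the normalization. A generic point of $\mc{V}$ has ramification profile $\mathbf{k} = (2,4,\ldots,2n)$ (the generic profile consistent with $\op = \mb{C}[f_1,\ldots,f_n]$ having hyperelliptic value semigroup), so it suffices to compute the codimension of the stratum $\mc{V}_{\mathbf{k}}$. This codimension decomposes as $\sum_{i=1}^n(k_i-i) - 1 = n(n+1)/2 - 1$, coming from the ramification conditions minus one degree of freedom for the position of $P$ along $\mathbb{P}^1$, plus the number of algebraically independent conditions on the coefficients $a_{i,\ell}$ required to force $v(\op) = \sss$.

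Since each $v(f_i) = 2i$ is even, monomials $f_1^{a_1}\cdots f_n^{a_n}$ have even valuation, so odd valuations in $v(\op)$ can only arise from $\mb{C}$-linear combinations in which the leading even-order terms cancel. The number of such monomials of valuation $2m$ is $p_n(m)$, the number of partitions of $m$ into parts at most $n$, whereas the graded piece $F^{\geq 2m}/F^{\geq 2m+1}$ of $\op$ is at most one-dimensional. Hence there are at most $p_n(m) - 1$ independent linear syzygies among the val-$2m$ monomials, each producing a candidate element of $\op$ of odd valuation $\geq 2m+1$.

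The heart of the argument is an inductive, order-by-order procedure through the odd valuations $\ell = 3,5,\ldots,2g-1$. At each such $\ell$, one enumerates the candidates currently having leading term at $t^\ell$; these come from (a)~the irreducible syzygies at even valuation $\ell - 1$, and (b)~``propagated'' syzygies originating at smaller even valuations, whose leading terms have migrated up to order $\ell$ after prior reductions. Requiring each candidate's leading $t^\ell$ coefficient to vanish yields one polynomial equation in the $a_{i,\ell'}$'s with $\ell' \leq \ell$, and the key step is to show these equations are algebraically independent from those imposed at earlier orders---this I would establish by exhibiting an upper-triangular structure in which each new condition involves a coefficient $a_{i(\ell),\ell}$ not appearing in any previous equation.

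Summing across $\ell = 5,7,\ldots,2g-1$ should produce exactly $(n-1)g - n(n+1)/2 + 1$ independent conditions, yielding the claimed codimension of $(n-1)g$. The main obstacle is the propagation bookkeeping: many syzygies become automatic once lower-order conditions are imposed (for example, $f_1\cdot(f_2-f_1^2)$ has valuation $\geq 8$ once $v(f_2-f_1^2) \geq 6$ has been forced), so one must carefully sift out which candidates at each level yield genuinely new conditions and which are already implied. Once the triangular structure is verified, unirationality of $\mc{V}_{(2,4,\ldots,2n)}$ follows immediately, as the complement of the conditions in the parameter space is an open subset of an affine space parametrized by the remaining free $a_{i,\ell}$'s.
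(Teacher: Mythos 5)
Your proposal is correct and follows essentially the same route as the paper: reduce to the generic profile $(2,4,\dots,2n)$, write the codimension as $r_P+b_P-1$, then work order by order through the odd gaps, imposing vanishing of leading coefficients and securing algebraic independence (and unirationality) via exactly the triangular structure you describe, each new condition having leading term $a_{i,\ell}$ in a fresh variable. The ``propagation bookkeeping'' you identify as the main obstacle is precisely what the paper's Lemma~\ref{lemma2.2} settles: every $F^{*}_{\lambda,r}$ attached to a composite partition $\lambda$ is a $\mathbb{C}[f_1,\dots,f_n]$-linear combination of the singleton $F^{*}_{i,u}$ with $u\leq r$, so only the singleton partitions $\lambda\in\{2,\dots,n\}$ contribute new conditions, giving $b_P=\sum_{i=2}^{n}(g-i)$ as in your count.
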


In order to prove Theorem~\ref{theo2.1}, we will fix ramification profiles. Accordingly, given a vector ${\bf k}:=(k_1,\ldots,k_n)\in\mathbb{N}_{>0}^{n}$, let $\mathcal{V}_{\bf{k}} \subset M^n_{d,g}$ denote the subvariety parameterizing rational curves $f: \mb{P}^1 \ra \mb{P}^n$ with a unique cusp $P$ for which $(0,k_1,\dots,k_n)$ is the nondecreasing sequence of vanishing orders of sections of $f$ in $f^{-1}(P)$. Given such a map $f$, the associated {\it ramification index} is $r_P:=\sum_{i=1}^n (k_i-i)$, while the number of {\it conditions beyond ramification} $b_P$ is prescribed by
%
\[
{\rm cod}_{[f]}(\mathcal{V}_{\bf k},M^n_d) = r_P+b_P-1
\]
where ${\rm cod}_{[f]}$ is the codimension of the largest component of $\mc{V}_{\bf k}$ containing $[f]$.

\medskip
Hereafter we assume that $k_1=2$. As explained in \cite[Thm. 2.1]{CLM}, we then have
\begin{equation}\label{equbbp}
b_P \geq \sum_{i=2}^{n}\left(g-\frac{k_i}{2} \right)
\end{equation}
and it follows that
\begin{equation} \label{eqcod}
    {\rm cod}(\mathcal{V}_{{\bf k}},M^n_d) \geq (n-1)g+\sum_{i=2}^n\left(\frac{k_i}{2}- i\right).
\end{equation}

The right-hand side of \eqref{eqcod} is minimized when $k_i=2i$, in which case 
its value is $(n-1)g$. So hereafter we assume that $k_i=2i$ for every $i$, 
in which case it suffices to prove that
\begin{equation} \label{b_Pequation}
b_P=\sum_{i=2}^{n}\left(g-i \right).    
\end{equation}

Our proof of (\ref{b_Pequation}) complements the argument of \cite[Thm. 2.1]{CLM}. More precisely, we will certify that the algebraic conditions listed there are exhaustive, by systematically determining all valuations that are realized by elements of the local algebra of a hyperelliptic cusp with ramification profile $(2,4,\dots,2n)$.

Any $F \in \op$ eligible to produce a condition decomposes as a sum of monomials in the parameterizing functions $f_i$, each of which we may assume has $t$-valuation strictly less than $2g$. In other words, 
we have 
\begin{equation} \label{extendedsum}
\begin{split}
F&=\alpha_2f_2+\alpha_{1^2}f_1^{2} \\
&+\alpha_3f_3+\alpha_{2,1}f_2f_1+\alpha_{1^3}f_1^{3} \\
&+\alpha_4f_4+\alpha_{3,1}f_3f_1+\alpha_{2,1^2}f_2f_1^2+\alpha_{2^2}f_2^2+\alpha_{1^4}f_1^{4} \\
& \vdots \\ 
&+\alpha_nf_n+\alpha_{n-1,1}f_{n-1}f_1+\alpha_{n-2,1^2}f_{n-2}f_1^2 +\alpha_{n-2,2}f_{n-2}f_2+\ldots+\alpha_{1^{n}}f_1^{n} \\ 
&+\alpha_{n,1}f_nf_1+\alpha_{n-1,1^2}f_{n-1}f_1^2+\alpha_{n-1,2}f_{n-1}f_2+
\ldots+\alpha_{1^{n+1}}f_1^{n+1} \\
& \vdots \\
&+\alpha_{n,1^p}f_{n}f_1^p+\alpha_{n,2,1^{p-1}}f_{n}f_2 f_1^{p-1}
+\ldots+\alpha_{1^{n+p}}f_1^{n+p}
\end{split}
\end{equation} in which $2g-2=2(n+p)$. The first $n-1$ lines in \eqref{extendedsum} are sums of terms of order $2i$, for $2\leq i\leq n$, while the last $p$ lines are sums of terms of order $2(n+j)$, for $1\leq j\leq p$. 
With some additional notation, we may streamline \eqref{extendedsum} significantly. Namely, let $\ell \in \mathbb{N}$ and let
\[
\mathcal{P}_\ell=\mathcal{P}_{\ell}^n:=\{ i_1\ldots i_s\,|\, i_1+ \cdots+i_s= \ell, \ \text{for}\ i_q \in \{1,\ldots, n \}\}
 \footnote{Hereafter, whenever there is no ambiguity, we will omit commas between parts of partitions.}
 \]
denote the subset of all partitions of $\ell$ each of whose parts is of size at most $n$. Let 
$$
\mathcal{P}_{\ell}^{\circ}:=\mathcal{P}_{\ell} \backslash \{1^{\ell}\} \ \ \textrm{and} \ \ f_{\lambda}:=f_{i_1} \cdots f_{i_s}, \ \lambda=i_1 \ldots i_s \in \mathcal{P}_\ell.
$$
Then (\ref{extendedsum}) may be rewritten succinctly as the statement that
\[
F=\sum_{\ell=2}^{n+p}\left( \sum_{\lambda \in \mathcal{P}_\ell} \alpha_\lambda f_\lambda \right).
\] 
Now let  $\lambda=i_1 \ldots i_s \in \mathcal{P}_{\ell}^{\circ}$ and $i_\lambda:=i_1+\cdots+i_s$. Define recursively 
\begin{equation}
\label{equf*r} 
\begin{split} 
F_{\lambda,0}^{*}&:=f_{\lambda} \\ 
F_{\lambda,r}^{*}&:=F_{\lambda,r-1}^{*}-a^{\lambda,r-1}_{2(i_\lambda+r-1)}f_1^{i_\lambda+r-1} \, \in \,  \oo_P\ \text{for}\ 1\leq r\leq g-i_\lambda
\end{split} 
\end{equation}
 where $a^{\lambda,r}_{m} \in \mathbb{C}[a_{i,\ell}]$ is the coefficient of $t^{m}$ in the power series $F_{\lambda,r}^{*}=F_{\lambda,r}^{*}(t)$.
 
 Note that $a^{\lambda,0}_{2i_\lambda}=1$ for every $\lambda \in \mathcal{P}_\ell^{\circ}$. Moreover, for every $r \geq 1$ we have $a^{\lambda,r}_{2i_{\lambda}+2r-1}=0$ since $2i_\lambda+2r-1 \not\in \sss$. It follows that the vanishing of the coefficients $a^{\lambda,r}_{2i_\lambda+2r-1}$ comprise all conditions beyond ramification; while the conditions that appear in \cite{CLM} are precisely those arising from $F_{i,r}^{*}$, for $2 \leq i \leq n$. So it remains to prove that the latter conditions are exhaustive. A crucial step in this direction is the following.

\begin{rem}
\emph{In the interest of brevity, hereafter 
we will use $a(\lambda,r)$ instead of $a^{\lambda, r}_{2(i_\lambda+r)}$ to denote the coefficient of $t^{2(i_\lambda+r)}$, i.e., of the term that computes the $t$-valuation of $F^*_{\lambda, r}$ in the hyperelliptic case.}
\end{rem}

\begin{lemma}\label{lemma2.2}
Given $\lambda=i_1 \ldots i_s \in \mathcal{P}_\ell^{\circ}$ and $1\leq q< s$, let $\lambda_q:=i_1 \ldots i_q$. We have \begin{equation*}\label{formulaF*}
F^{*}_{\lambda,r}=F_{i_1,r}^{*} \, f_1^{i_{\lambda}-i_{1}}+ \sum_{q=2}^{s}\left( f_{\lambda_{q-1}}F_{i_q,r}^{*} \, f_1^{i_{\lambda}-i_{\lambda_q}}+f_1^{i_{\lambda}-i_{\lambda_{q-1}}}\sum_{l=1}^{r-1}a(i_q,l) \, F^{*}_{\lambda_{q-1},r-l} \, f_1^{l} \right). 
\end{equation*}
In particular, the $F_{\lambda,r}^{*}$ are $\mathbb{C}[f_1,\ldots,f_n]$-linear combinations of the $F_{i_q,u}^{*}$ (associated with singleton partitions) for $1 \leq u\leq r$.

\end{lemma}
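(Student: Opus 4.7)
I would prove the lemma by induction on the length $s$ of $\lambda = i_1\cdots i_s$. The base case $s=1$ is trivial: the sum from $q=2$ is empty and the first term reduces to $F^*_{i_1,r} = F^*_{\lambda,r}$. For the inductive step, set $\mu := \lambda_{s-1}$, so that $f_\lambda = f_\mu f_{i_s}$ and $i_\lambda = i_\mu + i_s$. My first task is to establish the ``peeling'' identity
\begin{equation*}
F^*_{\lambda,r} = f_1^{i_s}\,F^*_{\mu,r} + f_\mu\,F^*_{i_s,r} + \sum_{l=1}^{r-1} a(i_s,l)\, f_1^{i_s+l}\, F^*_{\mu,r-l},
\end{equation*}
which separates the final part $i_s$ from $\mu$. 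I would prove this by a secondary induction on $r$. The base $r=1$ is the direct algebraic identity $f_\lambda - f_1^{i_\lambda} = f_1^{i_s}(f_\mu - f_1^{i_\mu}) + f_\mu(f_{i_s} - f_1^{i_s})$, and the inductive step applies the defining recursion $F^*_{\bullet,r} = F^*_{\bullet,r-1} - a(\bullet,r-1)f_1^{i_\bullet+r-1}$ to each of the three terms and invokes the IH at stage $r-1$. After cancellation, what remains is a convolution-type identity $a(\lambda,r-1) = \sum_{l=0}^{r-1} a(i_s,l)\,a(\mu,r-1-l)$ among the leading-order coefficients, which one extracts by reading off the coefficient of $t^{2(i_\lambda+r-1)}$ in the product expansion $f_\lambda = f_\mu\cdot f_{i_s}$, modulo the odd-order singleton conditions beyond ramification.

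Given the peeling identity, I would feed each $F^*_{\mu,r-l}$ through the outer inductive hypothesis to expand it as a $\mathbb{C}[f_1,\ldots,f_n]$-combination of singleton $F^*_{i_q,u}$ with $q\leq s-1$. The resulting $f_1^{i_s}\cdot F^*_{\mu,r}$ contribution produces, after merging $f_1$-powers using $i_\lambda - i_\mu = i_s$, precisely the $q=1,\ldots,s-1$ summands of the lemma's displayed formula; meanwhile $f_\mu\,F^*_{i_s,r}$ together with the final sum supplies the $q=s$ summand $f_{\lambda_{s-1}} F^*_{i_s,r} + f_1^{i_\lambda - i_{\lambda_{s-1}}}\sum_l a(i_s,l)\,F^*_{\lambda_{s-1},r-l}\,f_1^l$. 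The ``in particular'' statement follows by iterating the lemma: each $F^*_{\lambda_{q-1},r-l}$ appearing in an inner sum carries a strictly shorter partition index, so repeated application collapses the entire expression to a $\mathbb{C}[f_1,\ldots,f_n]$-combination of singleton $F^*_{i_q,u}$ with $1 \leq u \leq r$.

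The main technical obstacle I anticipate lies in the secondary induction on $r$: the leading-coefficient convolution identity above is not a formal tautology but only holds once the odd-order singleton conditions beyond ramification are taken into account, so the bookkeeping must carefully track how those vanishings propagate upward through the recursive decomposition. A small check at $\lambda = i_1 i_2$ with $r=2$ confirms that the discrepancy $a(\lambda,1) - (a(i_2,1)+a(i_1,1))$ is a multiple of the odd-order coefficient of $F^*_{i_2,1}$, matching the expectation. Controlling this propagation is precisely what then yields, as an immediate byproduct, the assertion that the singleton conditions exhaust all conditions contributed by the general $F^*_{\lambda,r}$, in line with the strategy set up earlier in the section.
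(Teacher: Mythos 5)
Your argument is correct, and it reaches the identity by a genuinely different inductive route than the paper's. The paper fixes $\lambda$ and inducts on $r$ alone: its base case $r=1$ is the same telescoping peel-off you use, but its inductive step carries the full $s$-part expansion through each stage, extracting the leading-coefficient identity \eqref{formulacoef} from the inductive hypothesis \eqref{hypothesis} and then redistributing $a(\lambda,r)\,f_1^{i_\lambda+r}$ across all terms at once via the auxiliary quantities $X$, $Y_q$, $Z_q^l$. You instead induct on the length $s$, isolating the two-factor peeling identity for $\mu=\lambda_{s-1}$ and $i_s$ and proving it by an inner induction on $r$ whose only nontrivial input is the convolution $a(\lambda,r)=\sum_{l=0}^{r}a(i_s,l)\,a(\mu,r-l)$; the general formula then falls out by unwinding, since $f_1^{i_s}$ times the lemma's formula for $(\mu,r)$ reproduces exactly the $q\leq s-1$ summands. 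This modularizes the bookkeeping and makes the key fact conceptually transparent (multiplicativity of $f_1$-adic expansions under $f_\lambda=f_\mu f_{i_s}$); what the paper's route buys in exchange is the explicit all-$q$ coefficient formula \eqref{formulacoef} and its shifted version \eqref{formulacoef2}, which are used verbatim in Remark~\ref{rem2.4} and in the proof of Theorem~\ref{theo2.1}, whereas you would need to unwind your recursion to recover them. Two points of hygiene: the convolution is most cleanly extracted not from $f_\lambda=f_\mu f_{i_s}$ directly but by reading off the coefficient of $t^{2(i_\lambda+r)}$ in your own stage-$r$ peeling identity (exactly parallel to how the paper derives \eqref{formulacoef} from \eqref{hypothesis}), since the minimal orders of the factors force a unique split of that degree; and you are right that neither version is a formal identity in the coefficients $a_{i,\ell}$ --- both extractions discard cross terms involving the odd-order coefficients $a^{\lambda,r}_{2(i_\lambda+r)-1}$, which vanish because $F^*_{\lambda,r}\in\op$ and those odd orders are gaps of ${\rm S}$, as the paper records just before the lemma. (In your spot check at $\lambda=i_1i_2$, note the discrepancy involves the odd-order coefficients of both $F^*_{i_1,1}$ and $F^*_{i_2,1}$, not only the latter.)
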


\begin{proof}
We proceed by induction on $r$. To begin, assume $r=1$. Note that
\[
\setlength{\jot}{10pt}
\begin{split}
F_{i_1 \ldots i_s,1}^{*}=&f_{i_1} \cdots f_{i_s}-f_1^{i_1+\cdots+i_s} \\
=& f_{i_1} \cdots f_{i_s}-f_{i_1}f_{1}^{i_2+\cdots+i_s}+f_{i_1}f_{1}^{i_2+\cdots+i_s}-f_1^{i_1+i_2+\cdots+i_s} \\
=&F_{i_1,1}^{*} f_{1}^{i_2+\cdots+i_s}+f_{i_1} \left(f_{i_2} \cdots f_{i_s}-f_{1}^{i_2+\cdots+i_s} \right).
\end{split}
\]
Applying the same idea to successively rewrite  $f_{i_j} \cdots f_{i_s}-f_{1}^{i_j+\cdots+i_s}$ for every $j=2,\dots,s-1$, we deduce that
\[
\begin{split}
F_{i_1 \ldots i_s,1}^{*}=&F^{*}_{i_1,1} \, f_1^{i_2+\ldots+i_s}+f_{i_1} F^{*}_{i_{2},1} \, f_1^{i_3+ \ldots +i_{s}}+\cdots+f_{i_1 \ldots i_{s-1}} \, F^{*}_{i_s,1} \\
=& F^{*}_{i_1,1} \, f_1^{i_{\lambda}-i_{1}} +\sum_{q=2}^{s} f_{\lambda_{q-1}}F_{i_q,1}^{*} \, f_1^{i_\lambda-i_{\lambda_{q}}} 
\end{split}
\]
as desired.

\medskip

\noindent Now assume the desired result is valid for some $r > 1$; that is, 
\begin{equation}\label{hypothesis} \Scale[0.95]{ F^{*}_{\lambda,r}=F_{i_1,r}^{*} \, f_1^{i_\lambda-i_{1}}+ \sum_{q=2}^{s}\left( f_{\lambda_{q-1}} F_{i_q,r}^{*} \, f_1^{i_\lambda-i_{\lambda_q}}+f_1^{i_\lambda-i_{\lambda_{q-1}}}\sum\limits_{l=1}^{r-1}a(i_q,l) \, F^{*}_{\lambda_{q-1},r-l} \, f_1^{l} \right).}
\end{equation} 

Note that all terms on the right-hand side of \eqref{hypothesis} are of order $2(i_{\la}+r)$; summing lowest-order coefficients, it follows that
\begin{equation}
\label{formulacoef}
a(\lambda,r)=a(i_1,r)+\sum_{q=2}^{s}\left(a(i_q,r)+\sum_{l=1}^{r-1}a(i_q,l) \cdot a(\lambda_{q-1},r-l) \right).
\end{equation}

\noindent Now let
\[
\begin{split}
X&:=F^{*}_{i_1,r} \, f_1^{i_\lambda-i_{1}}-a(i_1,r) \, f_1^{i_{\lambda}+r}, \,\, \\[3ex]
Y_q&:=f_{\lambda_{q-1}} F^{*}_{i_q,r} \,f_1^{i_{\lambda}-i_{\lambda_q}}-a(i_q,r) \,f_1^{i_{\lambda}+r}, \text{ and }\\[3ex]
Z_q&:=f_1^{i_{\lambda}-i_{\lambda_{q-1}}}\sum_{l=1}^{r-1}Z_q^l, \text{ where } \\[3ex] 
Z_q^l&:=a(i_q,l) \, F^{*}_{\lambda_{q-1},r-l} \, f_1^{l}-a(i_q,l) \cdot a(\lambda_{q-1},r-l) \, f_1^{i_{\lambda_{q-1}}+r-l} \,f_1^{l}.
\end{split}
\]

\medskip
Applying \eqref{hypothesis} together with \eqref{formulacoef} and the definition of $F^{*}_{\lambda,r+1}$, we deduce that
\[
F^{*}_{\lambda,r+1}=F^*_{\lambda,r}-a(\la,r) f_1^{i_{\la}+r}=X+\sum\limits_{q=2}^{s}(Y_q+Z_q). 
\]
In light of the facts that
\[
\Scale[0.9]{\begin{split}
X &=\left(F_{i_1,r}^{*}-a(i_1,r)f_1^{i_1+r} \right)f_1^{i_\lambda-i_{1}}=F_{i_1,r+1}^{*} f_1^{i_\lambda-i_{1}}, \\[3ex]
Y_q &=f_{\lambda_{q-1}}F^{*}_{i_q,r} \, f_1^{i_{\lambda}-i_{\lambda_q}}-f_{\lambda_{q-1}} \, a(i_q,r) \, f_1^{i_{\lambda}-i_{\lambda_{q-1}}
+r}+f_{\lambda_{q-1}} \, a(i_q,r) \, f_1^{i_{\lambda}-i_{\lambda_{q-1}}+r} -a(i_q,r) \, f_1^{i_\lambda+r} \\[3ex] &=f_{\lambda_{q-1}}\left(F_{i_q,r}^{*}-a(i_q,r) f_1^{i_q+r} \right)f_1^{i_\lambda-i_{\lambda_q}} +a(i_q,r) \left(f_{\lambda_{q-1}}-f_1^{i_{\lambda_{q-1}}}\right)f_1^{i_\lambda-i_{\lambda_{q-1}}+r} \\[3ex]
&=f_{\lambda_{q-1}}F_{i_q,r+1}^{*} \, f_1^{i_\lambda-i_{\lambda_q}}+f_1^{i_\lambda-i_{\lambda_{q-1}}}a(i_q,r) \, F_{\lambda_{q-1},1}^{*} \,f_1^r, \text{ and} \\[3ex]
Z_q^l&=a(i_q,l) \, F^{*}_{\lambda_{q-1},r-l} \, f_1^{l}-a(i_q,l) \cdot a(\lambda_{q-1},r-l) \, f_1^{i_{\lambda_{q-1}}+r-l} \, f_1^l \\[3ex]
&= a(i_q,l) \, F^{*}_{\lambda_{q-1},r+1-l} \, f_1^{l}
\end{split}  }
\]

it follows that
\[ 
\Scale[0.82]{\begin{split}
F^{*}_{\lambda,r+1}
&=F^{*}_{i_1,r+1} \, f_1^{i_{\lambda}-i_{\lambda_1}}\\
&+\sum_{q=2}^{s}\bigg( f_{\lambda_{q-1}}\, F^{*}_{i_q,r+1} \, f_1^{i_{\lambda}-i_{\lambda_q}}+f_1^{i_{\lambda}-i_{\lambda_{q-1}}}\, a(i_q,r) \,{F^{*}_{\lambda_{q-1},1} \, f_1^{r}}+f_1^{i_{\lambda}-i_{\lambda{q-1}}}\sum_{l=1}^{r-1} a(i_q,l) F^{*}_{\lambda_{q-1},r+1-l} \, f_1^{l}\bigg) \\
&=F_{i_1,r+1}^{*} \, f_1^{i_\lambda-i_{\lambda_1}}+ \sum_{q=2}^{s}\left( f_{\lambda_{q-1}} \, F_{i_q,r+1}^{*} \, f_1^{i_\lambda-i_{\lambda_q}}+f_1^{i_\lambda-i_{\lambda_{q-1}}}\sum_{l=1}^{r} a(i_q,l) \, F^{*}_{\lambda_{q-1},r+1-l} \, f_1^{l} \right)
\end{split} }
\]
\end{proof}

\begin{ex} \label{example2.2}
\emph{
Assume $n=4$, $g=7$, and $(k_1,k_2,k_3,k_4)=(2,4,6,8)$. Let $F$ be as in \eqref{extendedsum}. Here $p=2$ and $F=\sum_{\ell=2}^{6}\left( \sum_{\lambda \in \mathcal{P}_\ell} \alpha_\lambda f_\lambda \right)$.}
\emph{Because $5$ is a gap of ${\rm S}=\langle 2,15 \rangle$, the first conditions beyond ramification arise from cancelling the terms of order $4$ in the expansion of $F$. Accordingly, we require that
\begin{equation}\label{equfr1}
\alpha_{1^2}=-\alpha_2. 
\end{equation} 
Let $F_1$ denote the function obtained from $F$ after substituting $-\al_2$ for $\alpha_{1^2}$ as in \eqref{equfr1}; we then have 
\[
\begin{split}
F_1
&=\alpha_2F_{2,1}^{*}+R_1
\end{split}
\]
}
\hspace{-11pt} \emph{where $R_1=\sum_{\ell=3}^{6}\left( \sum_{\lambda \in \mathcal{P}_\ell} \alpha_\lambda f_\lambda \right)$ (of order $6$) comprises the terms of $F$ unaffected by the substitution \eqref{equfr1}. The first condition beyond ramification is thus $a_{2,5}-2a_{1,3}=0$, in which we have used
$$
a^{2,1}_{5}=a_{5}^{2,0}-a_{5}^{1^2,0}=a_{2,5}-2a_{1,3}=0.
$$
In the next step we derive conditions by cancelling the terms of order 6 of $F_1$, imposing that 
$
\alpha_{1^3}=-(\alpha_3+\alpha_{21}+\alpha_2 \, a^{2,1}_{6})
=-\sum_{\ell=2}^{3} \left(\sum_{\lambda \in \mathcal{P}_\ell^\circ} \alpha_\lambda a^{\lambda,3-\ell}_{6} \right)$. 
Substituting in $F_1$ yields 
\begin{equation}
\begin{split}
F_2
&=\alpha_3F_{3,1}^{*}+\alpha_{21}F_{21,1}^{*}+\alpha_2 F_{2,2}^{*}+R_2 \\
&=\sum_{\ell=2}^{3}\left(\sum_{\lambda \in \mathcal{P}_\ell^\circ}\alpha_\lambda F_{\lambda,3-(\ell-1)}^{*}\right)+\sum_{\ell=4}^{6} \left( \sum_{\lambda \in \mathcal{P}_{\ell}} \alpha_\lambda f_\lambda \right). \nonumber
\end{split}
\end{equation} 
As $7 \not\in \sss$, a priori we obtain three conditions, namely
\[
{\small \begin{split}
a^{3,1}_{7} &= a_{7}^{3,0}-a_{7}^{1^3,0}=a_{3,7}-3a_{1,3}=0, \\
a^{21,1}_{7} &=a_{7}^{21,0}-a_{7}^{1^3,0}=a_{2,5}-2a_{1,3}=0, \text{ and} \\
a^{2,2}_{7} &=a_7^{2,1}-a_6^{2,1}a_7^{1^3,0}=a_{2,7}-2a_{1,5}+4a_{1,3}a_{1,4}-3a_{1,3}a_{2,6}+3a_{1,3}^3=0.
\end{split}}
\]}

\emph{However, we have $a_{2,5}-2a_{1,3}=0$ from the previous step, so this step produces only two new algebraically independent conditions beyond ramification.}

\medskip
\emph{Next, to cancel the terms of order $8$ of $F_2$, we impose that $$\alpha_{1^4}=-\left(\sum_{\lambda \in \mathcal{P}_4^\circ}\alpha_\lambda a_{8}^{\lambda, 0} +\sum_{\lambda \in \mathcal{P}_3^\circ}\alpha_\lambda a^{\lambda,1}_{8}+\sum_{\lambda \in \mathcal{P}_2^\circ}\alpha_\lambda a^{\lambda,2}_{8} \right)=-\sum_{\ell=2}^{4} \left(\sum_{\lambda \in \mathcal{P}_\ell^\circ} \alpha_\lambda a^{\lambda,4-\ell}_{8} \right).$$ Let $F_3$ denote the function obtained from $F_2$ via substitution; then
\begin{equation}
\begin{split}
F_3
&=\alpha_4F_{4,1}^{*}+\alpha_{31}F_{31,1}^{*}+\alpha_{2^2}F_{2^2,1}^{*}+\alpha_{21^2}F_{21^2,1}^{*} + \alpha_{3}F_{3,2}^{*}+\alpha_{21}F_{21,2}^{*}+\alpha_{2}F_{2,3}^{*}+R_3 \\
&=\sum_{\ell=2}^{4}\left(\sum_{\lambda \in \mathcal{P}_\ell^\circ}\alpha_\lambda F_{\lambda,4-(\ell-1)}^{*}\right)+\sum_{\ell=5}^{6} \left( \sum_{\lambda \in \mathcal{P}_{\ell}} \alpha_\lambda f_\lambda \right). \nonumber
\end{split}
\end{equation} 
It follows necessarily that $a^{\lambda,1}_{9}=0$, $\lambda \in \mathcal{P}_4^\circ$; $a^{\lambda,2}_{9}=0$, $\lambda \in \mathcal{P}_3^\circ$; and $a^{\lambda,3}_{9}=0$, $\lambda \in \mathcal{P}_2^\circ$. Analogously to the previous step, here we have 
\[
\begin{split}
F_{31,1}^{*}&=F_{3,1}^{*} f_1, \, F_{2^2,1}^{*}=f_2^2-f_1^4=
F_{2,1}^{*}f_1^{2}+f_2F_{2,1}^{*}=F_{2,1}^{*}(f_1^{2}+f_2), \\
F_{21^2,1}&=F_{2,1}^{*} f_1^{2}, \text{ and } F_{21,2}^{*}=F_{2,2}^{*}f_1
\end{split}
\]
which yield $a^{31,1}_{9}=a^{3,1}_{7}$, $a^{2^2,1}_{9}=2a^{2,1}_{5}$, $a^{21^2,1}_{9}=a^{2,1}_{5}$, and $a^{21,2}_{9}=a^{2,2}_{7}$, respectively. The only new conditions beyond ramification obtained in this step are $a^{4,1}_{9}=0$, $a^{3,2}_{9}=0$ and $a^{2,3}_{9}=0$.}

\medskip
\emph{The remaining steps follow similarly. Indeed, to cancel terms of order 10 we impose that} 
$\alpha_{1^5}=-\sum_{\ell=2}^{5} \left(\sum_{\lambda \in \mathcal{P}_\ell^\circ} \alpha_\lambda a^{\lambda,5-\ell}_{10} \right) \nonumber 
$
\emph{and we deduce that}
\[
\begin{split}
F_4&=\sum_{\ell=2}^{5}\left(\sum_{\lambda \in \mathcal{P}_\ell^\circ}\alpha_\lambda F_{\lambda,5-(\ell-1)}^{*}\right) + \sum_{\lambda \in \mathcal{P}_{6}^{\circ}}\alpha_\lambda f_\lambda .
\end{split}
\]
\emph{The conditions beyond ramification obtained in this step are $a^{\lambda,1}_{11}=0$, $\lambda \in \mathcal{P}_5^\circ$, $a^{\lambda,2}_{11}=0$, $\lambda \in \mathcal{P}_4^\circ$, $a^{\lambda,3}_{11}=0$, $\lambda \in \mathcal{P}_3^\circ$ and $a^{\lambda,2}_{11}=0$, $\lambda \in \mathcal{P}_2^\circ$; but among these, the only new ones are $a^{4,2}_{11}=0$, $a^{3,3}_{11}=0$ and $a^{2,4}_{11}=0$.}

\medskip
\emph{Finally, substituting
$\alpha_{1^6}=-\sum_{\ell=2}^{6} \left(\sum_{\lambda \in \mathcal{P}_\ell^\circ} \alpha_\lambda a^{\lambda,6-\ell}_{12} \right)
$
we obtain 
\[
\begin{split}
F_5&=\sum_{\ell=2}^{6}\left(\sum_{\lambda \in \mathcal{P}_\ell^\circ}\alpha_\lambda F_{\lambda,6-(\ell-1)}^{*}\right); \nonumber
\end{split}
\] 
the conditions beyond ramification obtained in this step are $a^{\lambda,1}_{13}=0$, $\lambda \in \mathcal{P}_6^\circ$, $a^{\lambda,2}_{13}=0$, $\lambda \in \mathcal{P}_5^\circ$, $a^{\lambda,3}_{13}=0$, $\lambda \in \mathcal{P}_4^\circ$, $a^{\lambda,4}_{13}=0$, $\lambda \in \mathcal{P}_3^\circ$ and $a^{\lambda,5}_{13}=0$, $\lambda \in \mathcal{P}_2^\circ$, but among these, the only new ones are $a^{4,3}_{13}=0$, $a^{3,4}_{13}=0$ and $a^{2,5}_{13}=0$. It follows easily that $b_P=\sum_{i=2}^{4}(7-i)=12$, as expected.}


\end{ex}


\begin{rem}\label{rem2.4}
\emph{A few facts that follow from the proof of Lemma~\ref{lemma2.2} are  worth mentioning, as they are at the heart of the proof of Theorem~\ref{theo2.1}. First, according to \eqref{formulaF*}, the coefficient of $F_{\lambda,r}^{*}$ of order $2(i_{\lambda}+r)+j$, $j \geq 0$ is 
\begin{equation}\label{formulacoef2}
\begin{split}
a^{\lambda,r}_{2(i_\lambda+r)+j}&=a^{i_1,r}_{2(i_1+r)+j}+\sum_{q=2}^{s}\left(a^{i_q,r}_{2(i_q+r)+j}+\sum_{l=1}^{r-1} a(i_q,l) \cdot a^{\lambda_{q-1},r-l}_{2(i_{\lambda_{q-1}}+r-l)+j} \right).
\end{split}
\end{equation}
Arguing recursively, we see that $a^{\lambda,r}_{2(i_{\lambda}+r)+j}$ is a function of $a^{i_q,u}_{2(i_q+u)+j}$, $u \leq r$ and $q \in \{1,\ldots,s\}$.
On the other hand, the order of vanishing of $F_{\lambda,r}^{*}$ is $2(i_{\lambda}+r)$, while by definition
\begin{equation}
 \label{equdf*}  
 F_{\lambda,r}^{*}:=F_{\lambda,r-1}^{*}-a(\lambda,r-1) f_1^{i_\lambda+r-1}
\end{equation}
and the condition produced by $F^{*}_{\lambda,r}$ associated with the gap $2(i_{\lambda}+r)-1$ is $a^{\lambda,r}_{2(i_\lambda+r)-1}=0$. Now \eqref{equdf*} yields $a^{\lambda,r}_{2(i_\lambda+r)-1}=a^{\lambda,r-1}_{2(i_\lambda+r-1)+1}$, and by 
\eqref{formulacoef2} it follows that $a^{\lambda,r-1}_{2(i_\lambda+r-1)+1}$ is a linear combination of the coefficients $a^{i_q,u}_{2(i_q+u)+1}$, which vanish. 
Accordingly we see that $F_{\lambda_q,r}^{*}$ produces no new conditions beyond ramification whenever $2 < q \leq s$.}
\end{rem}


\begin{proof}[Proof of Theorem 2.1]
We follow closely the pattern of Example \ref{example2.2}. 
At each step, indexed by a gap $\ell$ of ${\rm S}$, we (a) we impose a relation among coefficients; (b) substitute correspondingly into an inductively-modified version of $F$; and (c) deduce the conditions beyond ramifications imposed by $\ell$.

\medskip
\noindent So assume we are at step $i$. Starting from
\begin{equation} \label{F-ith}
\begin{split}
F_{i-1}=\sum_{\ell=2}^{i} \left(\sum_{\lambda \in \mathcal{P}_i^{\circ}}\alpha_\lambda F_{\lambda,i+1-\ell}^{*} \right)+\sum_{\lambda \in \mathcal{P}_{i+1}}\alpha_\lambda f_\lambda+\sum_{\ell=i+2}^{n+p} \left(\sum_{\lambda \in \mathcal{P}_\ell}\alpha_\lambda f_\lambda \right)
\end{split}
\end{equation}
we impose the relation 
\begin{equation} \label{alpha_i-th}
\begin{split}
\alpha_{1^{i+1}}=-\sum_{\ell=2}^{i+1} \left(\sum_{\lambda \in \mathcal{P}_\ell^\circ} \alpha_\lambda a^{\lambda,i+1-\ell}_{2 (i+1)} \right)
\end{split}
\end{equation}
which forces the terms of order $2i+2$ to cancel.
Note that an instance of $\alpha_{1^{i+1}}$ appears in the last term of the second summand of the right-hand side of \eqref{F-ith}. Replacing it by the right-hand side of \eqref{alpha_i-th} and labeling the last summand of \eqref{F-ith} by $R_i$, we obtain
\[
\Scale[0.9]{ \begin{split}
F_{i}&=\sum_{\ell=2}^{i} \left(\sum_{\lambda \in \mathcal{P}_i^{\circ}}\alpha_\lambda F_{\lambda,i+1-\ell}^{*} \right)+\sum_{\lambda \in \mathcal{P}_{i+1}^\circ}\alpha_\lambda f_\lambda+\alpha_{1^{i+1}}f_1^{i+1}+R_i \\
&=\sum_{\ell=2}^{i} \left(\sum_{\lambda \in \mathcal{P}_i^{\circ}}\alpha_\lambda F_{\lambda,i+1-\ell}^{*} \right)+\sum_{\lambda \in \mathcal{P}_{i+1}^\circ}\alpha_\lambda f_\lambda-\left[\sum_{\ell=2}^{i+1}\left(\sum_{\lambda \in \mathcal{P}_\ell^\circ} \alpha_\lambda a^{\lambda,i+1-\ell}_{2 (i+1)} \right) \right]f_1^{i+1}+R_i \\ 
&=\sum_{\ell=2}^{i} \left(\sum_{\lambda \in \mathcal{P}_\ell^\circ} \alpha_\lambda \left(F_{\lambda,i+1-\ell}^{*}-a^{\lambda,i+1-\ell}_{2 (i+1)}f_1^{i+1} \right) \right) +\sum_{\lambda \in \mathcal{P}_{i+1}^\circ} \alpha_\lambda \left(f_\lambda-a^{\lambda,0}_{2(i+1)}f_1^{i+1} \right)+R_i. \nonumber
\end{split} }
\]

Note that $2(i+1)=2(i+1-\ell)+2\ell$, while for any $\lambda=i_1 \ldots i_s \in \mathcal{P}_\ell^ \circ$ we have $\ell=i_1+\ldots+i_s$; so $2\ell=2(i_1+\ldots+i_s)=2i_\lambda$. Inasmuch as $k_{i+1}=2(i+1)$ whenever $0 \leq i \leq n-1$ and $k_{n+j+1}=k_n+2(j+1)=2(n+j+1-\ell)+2\ell$ whenever $i=n+j$ with $j \geq 0$, it follows that  
\[
\Scale[0.9]{ \begin{split}
F_i=&\sum_{\ell=2}^{i} \left(\sum_{\lambda \in \mathcal{P}_\ell^\circ} \alpha_\lambda \left(F_{\lambda,i+1-\ell}^{*}-a^{\lambda,i+1-\ell}_{2(i_\lambda+i+1-\ell)}f_1^{i_\lambda+i+1-\ell} \right) \right)+\sum_{\lambda \in \mathcal{P}_{i+1}^\circ} \alpha_\lambda \left(f_\lambda-a^{\lambda,0}_{2 i_\lambda}f_1^{i_\lambda} \right) +R_i \\
=&\sum_{\ell=2}^{i+1} \left(\sum_{\lambda \in \mathcal{P}_\ell^\circ} \alpha_\lambda F_{\lambda,i+2-\ell}^{*} \right) +R_i.
\end{split} }
\]

\noindent After $n+p$ steps, we obtain
$F_{n+p}=\sum_{\ell=2}^{n+p} \left(\sum_{\lambda \in \mathcal{P}_\ell^\circ} \alpha_\lambda F_{\lambda,n+p+1-\ell}^{*} \right)$. According to Lemma~\ref{lemma2.2}, only those functions $F_{\lambda,r}^{*}$ with $\lambda \in \{2,\ldots,n\}$ provide conditions beyond ramification. As each of the first $n-1$ sums of $F_{n+p}$ is associated with a unique function $F_{\lambda,n+p+1-\ell}^{*}$ eligible to produce new conditions beyond ramification, and each such function yields $n+p+1-\ell$ conditions, 
the total number of such conditions obtained is given by
\[
b_P=\sum_{\ell=2}^{n}(n+p+1-\ell)=\frac{1}{2}(n-1)(n+2p).
\]
On the other hand, we have $2g=k_n+2p+2$ and $k_n=2n$, so
\[
b_P=\sum_{\ell=2}^{n}(n+p+1-\ell)=\sum_{\ell=2}^n\left(g-\ell \right)
\]
as desired.

\end{proof}

\begin{coro}\label{hyperelliptic_severi_unirationality}
With the same hypotheses and notation as in Theorem~\ref{theo2.1}, the generic hyperelliptic Severi stratum $\mc{V}_{(2,4,\dots,2n)}$ is unirational.
\end{coro}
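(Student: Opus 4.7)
The plan is to extract from the proof of Theorem~\ref{theo2.1} an explicit rational parameterization of $\mc{V}_{(2,4,\dots,2n)}$ by an affine space. Once such a parameterization is in hand, unirationality follows by definition. The proof of Theorem~\ref{theo2.1} establishes that the only conditions beyond ramification are
\[
a^{i,r}_{2(i+r)-1}=0, \qquad i\in\{2,\dots,n\},\ 1\leq r\leq g-i,
\]
so it suffices to exhibit a polynomial parameterization of their common solution set.

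The key observation is that each equation $a^{i,r}_{2(i+r)-1}=0$ is \emph{linear with unit coefficient} in the distinguished variable $a_{i,2(i+r)-1}$, and otherwise involves only coefficients $a_{1,\ell'}$ and $a_{i,\ell''}$ with $\ell''<2(i+r)-1$. Indeed, by the recursion \eqref{equf*r} we have $F^{*}_{i,r}=f_i-\sum_{s=0}^{r-1}a(i,s)f_1^{i+s}$, so the coefficient of $t^{2(i+r)-1}$ equals $a_{i,2(i+r)-1}$ minus a polynomial in the $a_{1,\ell'}$ with $\ell'<2(i+r)-1$ and in the $a(i,s)$ for $s<r$. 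Since $a(i,s)=a^{i,s}_{2(i+s)}$, induction on $r$ combined with \eqref{formulacoef2} shows that each $a(i,s)$ depends polynomially only on the $a_{1,\ell'}$ and on $a_{i,\ell''}$ with $\ell''\leq 2(i+s)<2(i+r)-1$. This triangular structure is already visible in Example~\ref{example2.2}, where $a^{2,1}_5=a_{2,5}-2a_{1,3}$ solves for $a_{2,5}$ as a polynomial in $a_{1,3}$; $a^{3,1}_7=a_{3,7}-3a_{1,3}$ solves for $a_{3,7}$; and so on.

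Ordering the $b_P$ equations by increasing $2(i+r)-1$ and breaking ties by $i$, I would solve each equation in turn for its distinguished variable $a_{i,2(i+r)-1}$ as an explicit polynomial in previously-treated coefficients. The remaining \emph{free} coefficients are the $a_{1,\ell}$ for every $\ell>2$ (the singleton partition $\lambda=1$ produces no condition beyond ramification, so $a_{1,\ell}$ is never a distinguished variable) together with the $a_{i,\ell}$ for $2\leq i\leq n$ and \emph{even} $\ell$ with $\ell>2i$. These free coefficients parameterize an affine space $\mb{A}^N$, and the substitution map yields a dominant polynomial map $\mb{A}^N\ra\mc{V}_{(2,4,\dots,2n)}$, once one passes from the local-expansion coefficients $a_{i,\ell}$ to the morphism coefficients $b_{i,\ell}$ via the inverse of the triangular substitution \eqref{equail} (which is itself a biregular automorphism of the ambient affine space). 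This certifies the unirationality of $\mc{V}_{(2,4,\dots,2n)}$.

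The main technical obstacle is verifying that distinct conditions never attempt to solve for the same distinguished variable, since otherwise the inductive solvability would break down. This is ensured by Remark~\ref{rem2.4}: non-singleton $F^{*}_{\lambda,r}$ produce no new conditions beyond those already contributed by singleton partitions, so the set of conditions is in bijection with pairs $(i,r)$ with $2\leq i\leq n$ and $1\leq r\leq g-i$, and the corresponding distinguished variables $a_{i,2(i+r)-1}$ are pairwise distinct.
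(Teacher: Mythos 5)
Your proposal is correct and follows essentially the same route as the paper: the paper's proof likewise observes that each condition beyond ramification has the form $a^{\lambda,(\ell+1)/2-\lambda}_{\ell}=a_{\lambda,\ell}+\ldots$, i.e., is linear with unit coefficient in a distinguished variable $a_{\lambda,\ell}$, and invokes this triangular linearity to conclude unirationality. Your write-up simply makes explicit the details the paper leaves implicit (the pairwise distinctness of the distinguished variables via Remark~\ref{rem2.4}, the identification of the free coefficients, and the passage back to the global coefficients $b_{i,\ell}$ through the triangular substitution \eqref{equail}).
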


\begin{proof}
It follows from the proof of Theorem~\ref{theo2.1} that the conditions imposed by each gap $\ell$ are 
indexed by the singleton partitions $\lambda=2,3,\ldots,(\ell-1)/2$, and are of the form $a^{\lambda,(\ell+1)/2 -\lambda}_{\ell}= a_{\lambda,\ell}+\ldots$; it is the linearity property of the (right-hand sides of) the latter equalities that certifies that $\mc{V}_{(2,4,\dots,2n)}$ is unirational.
\end{proof}

\begin{rem}
\emph{We expect that appropriate adaptations of the proofs of Lemma~\ref{lemma2.2}, Theorem~\ref{theo2.1}, and Corollary~\ref{hyperelliptic_severi_unirationality} extend more generally to cases in which $k_i < 2g$ and $k_i$ is even for every $i$; 
there will then be $b_P= \sum_{i=2}^n (g-\frac{k_i}{2})$ conditions beyond ramification, and correspondingly
\[
{\rm cod}(\mathcal{V}_{{\bf k}},M^n_d)= (n-1)g+\sum_{i=2}^n\left(\frac{k_i}{2}- i\right).
\]}
\end{rem}


\section{Dimensionality (and unirationality) conjectures for Severi varieties, revisited}\label{Severi_dimensionality_conjectures}

In this section we recast the dimensionality conjecture \cite[Conj.~3.6]{CLM} in terms of {\it Betti elements} naturally associated with the pair $({\rm S},\bf{k})$. Betti elements are distinguished by their factorizations, which control the factorizations of all other elements in ${\rm S}$. 


\begin{dfn} (see \cite[Sec. 2]{Chapman})
\label{Chapman}
\emph{For $T=\{t_1,\ldots,t_\ell \}\subset \mathbb{N}$, let $\mathcal{F}(T)$ be the free monoid generated by $T$, and let 
$\sss_T:=\{a_1t_1+\ldots +a_\ell t_\ell \,|\, a_i\in\mathbb{N}\}\subset\mathbb{N}$.
There is a natural projection
\[
\pi : \mathcal{F}(T) \longrightarrow \sss_T.
\]
Given $s\in\sss_T$, let $Z_s:=\pi^{-1}(s)\subset \mathbb{N}^\ell$; given a pair of elements $v,w\in Z_s$, we say $v\sim w$ whenever there is a chain $v=v_1,\ldots,v_n=w$ such that $\langle v_i,v_{i+1}\rangle\neq 0$. Then $\sim$ defines an equivalence relation on $Z_s$, and we say that $s$ is a \emph{Betti element of $T$} whenever $Z_s$ contains at least two $\sim$-equivalence classes.}
\end{dfn}

Now given a numerical semigroup $\sss$ together with an increasing sequence of strictly positive integers ${\bf k}=(k_1,\dots,k_n)$, let ${\bf k^{\ast}}:=\{k_{n+1},\dots,k_{\ell}\}$ be the increasing sequence of minimal generators of ${\rm S}$ strictly less than the conductor $c$ and not equal to any $k_i$, $i=1,\dots,n$. 

\medskip
Let $B$ denote the set of Betti elements of ${\bf k}\cup{\bf k^{\ast}}$ strictly less than $c$.
Given $b\in B$, let $\psi(b)$ denote the number of $\sim$-equivalence classes of $b$. 
Assume $b_1<\dots<b_p$ comprise $B$. Let 
$
V_i:=\{v_{1,i},\ldots,v_{\psi(b_i),i}\}\subset\mathbb{N}^{\ell}
$
denote a full set of representatives of $\sim$-equivalence classes of $b_i \in B$, with $1\leq i \leq p$; and let $V(E)$ denote the vector matroid on
\[
E:=\{v_{1,2}-v_{1,1},\ldots,v_{1,\psi(b_1)}-v_{1,1};\,\ldots\ \ldots\, ; v_{p,2}-v_{p,1},\ldots,v_{p,\psi(b_p)-v_{p,1}}\}. \]

Let $C_1,\ldots,C_q$ denote the circuits of $V(E)$. For every $i=1,\dots,q$, let $b(i)$ be the largest element among $b_1,\dots,b_p$ for which $v_{b(i),j}-v_{b(i),1}\in C_i$ for some $j$.
%
Given $b=b_i\in B$, let
$\phi(b):= \psi(b)-1-\#\{i\,|\, b=b(i)\}$ and $B^{\pr}:=\{ b\in B\,|\,\phi(b)\geq 1\}$.
Further, let
\[
\begin{split}
m:=&\min \{j \, |  \, k_j >  b \text{ for some }  k_j \in {\bf k}^* \text{ and } b \in B'  \}; \text{ and }  \\
{\bf k}^{\bullet}:=&\{ k_{j} \in {\bf k}^* \ |\  B'\cap (k_{j-1},k_{j})\neq\emptyset\text{ or } \sum\limits_{b <k_{j}} \phi(b)  > j-m \}.
\end{split}
\]
Finally, given $s \in \sss$, let $\rho(s):=\#\{r>s: r\not\in {\rm S}\}$. The {\it syzygetic defect} of $\sss$ with respect to ${\bf k}$ is
$D({\rm S},{\bf k}):= \sum_{i=1}^q\rho(b(i))$.

Our adjusted version of \cite[Conj.~3.6]{CLM} reads as follows.


\begin{conj}\label{reformulated_conjecture_bis}
Given positive integers 
$d$, $g$, and $n$ for which $n \leq 2g \leq d$, and a ramification profile ${\bf k}=(k_1,\dots,k_n)$ for which the corresponding Severi variety $\mc{V}_{\bf k} \sub M^n_{d,g}$ is nonempty, we have
\begin{equation}
\label{general_cod_estimate}
{\rm cod}(\mc{V}_{\bf k},M^n_d) =\sum_{i=1}^n(k_i-i)+\sum_{s \in B} (\psi(s)-1) \rho(s)- \sum_{s \in {\bf k}^{\bullet}} \rho(s)- D({\rm S},{\bf k})-1.
\end{equation}
\end{conj}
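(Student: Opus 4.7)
The plan is to extend the step-by-step gap analysis from Theorem~\ref{theo2.1} to the general $(\sss, {\bf k})$ setting, organized around the factorization combinatorics encoded by Betti elements. One starts with a candidate algebra element $F \in \op$, expanded as a finite sum $F = \sum_\lambda \alpha_\lambda f_\lambda$ over monomials indexed by factorizations in $H = {\bf k} \cup {\bf k}^*$. At each gap $\ell$ of $\sss$ with $\ell < c$, one imposes the relations that cancel the terms of order $\ell$, yielding an updated expression in which a coefficient $\alpha_\lambda$ is solved in terms of the others. The resulting residuals $F^*_{\lambda, r}$ are tracked through increasing orders of vanishing, and the conditions beyond ramification are exactly the vanishings forced on their coefficients at subsequent gaps.

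Next, I would prove a general analogue of Lemma~\ref{lemma2.2}, expressing each $F^*_{\lambda, r}$ as a polynomial combination of residuals $F^*_{\mu, u}$ whose indices $\mu$ represent distinct $\sim$-classes in $Z_{\pi(\mu)}$. The upshot is that a monomial $f_\lambda$ produces a genuinely new condition only when $\pi(\lambda) = s$ belongs to $B$ and the class of $\lambda$ in $Z_s$ has not yet been realized at an earlier scale. Since $s \in B$ contributes $\psi(s) - 1$ fresh classes and each class can post at most one vanishing per gap above $s$, one obtains a raw count of $(\psi(s) - 1)\rho(s)$ conditions per Betti element, which together with the $\sum_{i=1}^n (k_i - i)$ ramification conditions and the $-1$ accounting for the overall normalization of parameterizations recovers the first three terms of~\eqref{general_cod_estimate}.

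The remaining corrections encode two distinct redundancy phenomena. First, each minimal generator $k_j \in {\bf k}^*$ introduces a free parameter $\alpha_{k_j}$ in $F$; when the combinatorial criterion defining ${\bf k}^\bullet$ is met, that is, either a useful Betti element lies in $(k_{j-1}, k_j)$ or the cumulative surplus $\sum_{b < k_j} \phi(b)$ exceeds $j - m$, this parameter becomes available to absorb all $\rho(k_j)$ of the conditions placed above $k_j$, explaining the term $\sum_{s \in {\bf k}^\bullet} \rho(s)$. Second, the matroid $V(E)$ records linear dependences among the class-difference vectors $v_{i,j} - v_{i,1}$: every circuit $C_i$ lifts to a nontrivial algebraic identity at its top Betti element $b(i)$, collapsing $\rho(b(i))$ of the otherwise-independent higher-order conditions and producing the defect $D(\sss, {\bf k})$.

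The main obstacle is turning these matroid-theoretic and combinatorial corrections into \emph{sharp} counts. Upper bounds on the codimension should be accessible by iterating the Betti decomposition and tracking substitutions, but verifying that the circuits of $V(E)$ correspond to genuine, not merely potential, linear dependences among the coefficients $a^{\lambda,r}_{\ast}$, and that the trigger conditions defining ${\bf k}^\bullet$ are simultaneously necessary and sufficient for absorbing full blocks of $\rho(k_j)$ conditions, requires a careful parallel analysis of the substitution sequence. The hyperelliptic regime has ${\bf k}^* = \emptyset$ and no matroid circuits, while the supersymmetric regime has $|B| = 1$, so neither probes the interaction between the two corrections; intermediate test cases with mixed behaviour will likely be essential before any general induction can be set up rigorously. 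A positive byproduct we should expect, as in Corollary~\ref{hyperelliptic_severi_unirationality}, is that the substitutions remain linear in the remaining coefficients, yielding unirationality of the generic $\mc{V}_{\bf k}$ whenever the count is correct.
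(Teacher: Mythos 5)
The statement you are attempting is Conjecture~\ref{reformulated_conjecture_bis}: the paper offers no proof of it, and your proposal does not supply one either. What the paper actually does is verify the formula in two extremal cases --- the hyperelliptic stratum (Theorem~\ref{theo2.1}, where ${\bf k}^* = \emptyset$, $D({\rm S},{\bf k})=0$, and every Betti element satisfies $\psi(s)=2$) and the supersymmetric stratum (Theorem~\ref{Supersymmetric}, where $B=\{abc\}$ is a singleton with $\psi(abc)=3$) --- and your sketch correctly identifies the paper's intended mechanism: expand $F \in \op$ over monomials in $H={\bf k}\cup{\bf k}^*$, cancel order by order, and read off conditions beyond ramification at the gaps. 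But from there your text is a restatement of the paper's own motivating discussion (the remark on reducible decompositions and the Dyck-diagram example with $\sss=\langle 4,10,15\rangle$, where $15\in{\bf k}^*$ ``blocks'' a column), not an argument. You say so yourself in your final paragraph: the sharpness of the matroid correction $D({\rm S},{\bf k})$, the claim that every circuit of $V(E)$ lifts to a genuine algebraic identity among the coefficients $a^{\lambda,r}_{*}$, and the necessity and sufficiency of the trigger defining ${\bf k}^\bullet$ are all deferred. Those are precisely the open content of the conjecture; deferring them means nothing has been proved.

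Two concrete failure points deserve emphasis. First, the ``general analogue of Lemma~\ref{lemma2.2}'' you invoke is not formulated, and it is far from routine: the hyperelliptic recursion \eqref{equf*r} works because every element of $\langle 2,2g+1\rangle$ below the conductor is a multiple of $2$, so each residual $F^*_{\lambda,r}$ is corrected by a single monomial $f_1^{i_\lambda+r-1}$ at each step. In general there are several elements of $H$ and several monomials of each valuation, so the inductive bookkeeping that let the paper reduce to singleton partitions has no obvious analogue; the supersymmetric proof already required the ad hoc identity $h_1 g_1 = h_{j_1+1}$ (hence $d_{j_1+1,1}=d_{1,1}+e_{1,1}$) to propagate $C^{\alpha_2}_{j_1+i}=C^{\alpha_1}_i$, and nothing in your sketch replaces such identities in mixed cases. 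Second, even granting the raw count, equality in \eqref{general_cod_estimate} requires certifying that the surviving conditions are algebraically \emph{independent}; in both proved cases this rests on exhibiting a distinct linear leading term in each condition (the $a_{\lambda,\ell}$ in Corollary~\ref{hyperelliptic_severi_unirationality}, the $(a_{\ell_i}-c_{\ell_i})$ and $(b_{\ell_i}-c_{\ell_i})$ in \eqref{equind2}), and your proposal gives no mechanism for producing such leading terms once the corrections from ${\bf k}^\bullet$ and $D({\rm S},{\bf k})$ interact --- exactly the regime, as you note, that neither proved case probes. So the proposal should be read as a plausible research plan aligned with the paper's program, not as a proof attempt that could be checked against one.
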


The following alternative formulation is often computationally useful.

\begin{conj}\label{reformulated_conjecture}
Given positive integers 
$d$, $g$, and $n$ for which $n \leq 2g \leq d$, and a ramification profile ${\bf k}=(k_1,\dots,k_n)$ for which the corresponding Severi variety $\mc{V}_{\bf k} \sub M^n_{d,g}$ is nonempty, we have
\begin{equation}\label{general_cod_estimateBis}
{\rm cod}(\mc{V}_{\bf k},M^n_d) =\sum_{i=1}^n(k_i-i)+\sum_{s\in B} \phi(s)\rho(s)-\sum_{s\in{\bf k}^{\bullet}} \rho(s)- 1.
\end{equation}
\end{conj}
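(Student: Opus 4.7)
The plan is to adapt the algorithmic strategy used in the proof of Theorem~\ref{theo2.1}, extending it from hyperelliptic cusps to arbitrary numerical semigroups $\sss$ with general ramification profiles ${\bf k}$. Any $F \in \op$ of valuation less than $2g$ decomposes as a $\C$-linear combination of products $f_\lambda := f_{i_1}\cdots f_{i_s}$ indexed by partitions $\lambda$ whose parts lie in ${\bf k}\cup{\bf k}^{\ast}$ and satisfy $i_\lambda = i_1+\cdots+i_s \in \sss$. Processing the gaps of $\sss$ in ascending order, one subtracts at each gap an appropriate $\C$-multiple of a distinguished product in ${\bf k}\cup{\bf k}^{\ast}$ so as to cancel the contribution of that gap. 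This produces inductively modified functions $F_{\lambda,r}^{\ast}$, and one candidate condition beyond ramification is read off from each pair $(\lambda, r)$.

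The first structural step is a generalization of Lemma~\ref{lemma2.2}: for every $\lambda = i_1\cdots i_s$ and every $r \geq 1$, the function $F_{\lambda,r}^{\ast}$ should admit a telescoping decomposition as a polynomial combination, with coefficients in $\C[f_k : k\in{\bf k}\cup{\bf k}^{\ast}]$, of the singleton instances $F_{i_q,u}^{\ast}$ for $u\leq r$ and $q\in\{1,\dots,s\}$. Such a decomposition implies that the candidate condition extracted from $F_{\lambda,r}^{\ast}$ at a gap $\ell'$ is redundant relative to those extracted from singleton functions attached to factorizations of $i_\lambda+r-1$ that are $\sim$-equivalent to $\lambda$ in the sense of Definition~\ref{Chapman}. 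Hence only \emph{inequivalent} factorizations can produce algebraically independent new conditions beyond ramification, which is exactly the combinatorial content captured by the Betti-element count.

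The second step turns this redundancy analysis into the conjectured formula. For each Betti element $s \in B$, the fibre $Z_s$ splits into $\psi(s)$ equivalence classes; after modding out by the subspace of conditions already present from smaller Betti elements---precisely the circuits of the matroid $V(E)$ introduced prior to Conjecture~\ref{reformulated_conjecture_bis}---each surviving inequivalent class at level $s$ contributes one independent new condition at every gap strictly larger than $s$, for a total of $\phi(s)\,\rho(s)$ new conditions per Betti element. Meanwhile, each minimal generator $k_j \in {\bf k}^{\bullet}$ introduces an extra parameterizing function $f_{k_j}$ whose coefficients absorb $\rho(k_j)$ otherwise-independent conditions; the set ${\bf k}^{\bullet}$ is constructed precisely to isolate those generators whose contributions are not already encoded in the Betti-element combinatorics. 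Adding the ramification index $\sum(k_i-i)$ and subtracting one for the projectivization then recovers \eqref{general_cod_estimateBis}.

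I expect the main obstacle to be proving the $\C$-linear independence of the surviving conditions in full generality. In the hyperelliptic case and in the supersymmetric case treated in Section~\ref{supersymmetric_semigroups}, independence follows from very concrete structural features of the respective semigroups; a general argument will likely require a matroid-theoretic analysis of the incidence between Betti factorizations and the gaps of $\sss$, coupled with an induction on the ordering of $B$ that certifies no accidental linear relations arise among the coefficients $a^{\lambda,r}_{\ell}$. A secondary difficulty is to show that the combinatorial definition of ${\bf k}^{\bullet}$ actually matches the set of minimal generators whose parameterizing functions produce genuinely new reductions, a point that we suspect will require a sharpened characterization of when an element of ${\bf k}^{\ast}$ fails to contribute an independent family of conditions.
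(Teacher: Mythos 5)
The statement you set out to prove is Conjecture~\ref{reformulated_conjecture}; the paper contains no proof of it in general, only verifications in two extreme cases --- the generic hyperelliptic stratum (Theorem~\ref{theo2.1}, where the Betti set is maximal and ${\bf k}^{\ast}=\emptyset$) and the supersymmetric stratum of embedding dimension three (Theorem~\ref{Supersymmetric}, where $B$ is a singleton and again ${\bf k}^{\ast}=\emptyset$). Your proposal is therefore not a proof but a plan, and the two obstacles you yourself flag at the end --- algebraic independence of the surviving conditions for arbitrary $({\rm S},{\bf k})$, and matching the combinatorial definition of ${\bf k}^{\bullet}$ to the actual redundancies --- are precisely why the statement remains a conjecture. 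Up to that point your sketch largely paraphrases the heuristics the authors give in the remarks following the conjecture: factorizations with nonorthogonal factorization vectors yield conditions already accounted for by smaller elements, so only Betti elements (Definition~\ref{Chapman}) can contribute, each with weight $\phi(s)\rho(s)$. Note also that your ``first structural step,'' a generalization of Lemma~\ref{lemma2.2}, cannot be taken as stated: the lemma's recursion subtracts terms $a(\lambda,r)f_1^{i_\lambda+r}$ and exploits the facts that every element of $\langle 2,2g+1\rangle$ below the conductor is a power of the valuation of $f_1$ and that $2(i_\lambda+r)-1\notin\sss$; in general no single parameterizing function provides such a spine, which is exactly why the paper's supersymmetric argument is built instead on the monomials $h_i$, $g_i$ and Proposition~\ref{1or3factors}. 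In both proved cases, independence is certified by an explicit linear term in a fresh coefficient variable in each condition (whence unirationality); you would need an analogue of this in general, and none is supplied.

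One concrete error: your mechanism for the correction term $-\sum_{s\in{\bf k}^{\bullet}}\rho(s)$ is inverted. You assert that each $k_j\in{\bf k}^{\bullet}$ ``introduces an extra parameterizing function $f_{k_j}$ whose coefficients absorb $\rho(k_j)$ otherwise-independent conditions.'' But elements of ${\bf k}^{\ast}$ are by definition minimal generators of $\sss$ \emph{not} among the ramification orders, hence not the order of any monomial in the parameterizing functions $f_1,\dots,f_n$; there is no function $f_{k_j}$. As the paper's discussion of $\sss=\langle 4,10,15\rangle$ with ${\bf k}=(4,8,10,12)$ makes explicit, it is precisely this \emph{absence} that blocks the iterative procedure: one must replace a pair of iterating functions such as $F^*_{2,4}$ and $F^*_{4,2}$ by the single combination $F^*_{2,4}-a^{2,4}_{15}F^*_{4,2}$, so that one condition chain terminates and $\rho(15)$ candidate conditions are lost. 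For the same reason your opening reduction --- writing $F$ as a $\C$-linear combination of monomials $f_\lambda$ with parts in ${\bf k}\cup{\bf k}^{\ast}$ --- is not literally available: elements of valuation $k_j\in{\bf k}^{\ast}$ arise only after cancellations among monomials in the $f_i$, an additional layer of structure any honest induction would have to track.
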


\begin{rem}
\emph{In \cite[Conj.~3.6]{CLM}, the authors used the language of (irreducible) decompositions in place of factorizations of Betti elements. Given $s \in {\rm S}$ and a finite subset $H=\{s_1,\dots,s_h\}$ of ${\rm S}$, we may identify any {\it decomposition} 
\begin{equation}
\label{equdec}
s=m_1 s_1+\ldots+m_h s_h
\end{equation}
of $s$ with respect to $H$ with the vector $v=(m_1,\ldots,m_h)\in\nn^h$, and we say that $v$ is an {\it irreducible} (resp. {\it reducible}) decomposition of $s$ (with respect to $H$) if no (resp. some) proper sub-sum of the right-hand side of (\ref{equdec}) factors as an $\mb{N}$-linear combination of $s_1, \dots, s_h$.} 

\emph{To say that $v$ is a reducible decomposition of $s\in\sss$ means that there is some $w\in\mathbb{N}^h$ for which (i) $v-w$ is a decomposition of some $s^{\pr} <s$ in $\sss$; and (ii) $s^{\pr}$ admits a decomposition distinct from $v-w$, say $u$. Whenever this is the case, $u+w$ is a decomposition of $s$ distinct from $v$. Now assume $H=\bf{k} \cup \bf{k}^*$. In counting conditions imposed by cusps of type $({\rm S},\kk)$, it follows that the vector $v-(u+w)=(v-w)-u$ produced by (the decompositions of) $s$ has already been accounted for by $s^{\pr}$, so it doesn't contribute any further conditions. Similarly, suppose $v_1$ and $v_2$ are factorizations of $s\in {\rm S}$ whose inner product is nonzero; assume that  their $i$-th components are nonzero.
Let $w=e_i$; then $v_1-w$ and $v_2-w$ are factorizations of the smaller element $s^{\pr} =s-k_i\in\sss$. In particular, this means that only elements of $B$ contribute independent conditions, and that each of these conditions is selected for by a distinct equivalence class of factorizations.}

\medskip
\emph{In the current version of our dimension conjecture, the set ${\bf k}^*$ contributes conditions to the codimension not accounted for by \cite[Conj.~3.6]{CLM}. 
An instructive example is that of
$\sss = \langle 20, 22, 24; 45 \rangle \text{ and } {\bf k}=\{20,22,24\}$, in which case the element $s=90 \in \sss$ generates new conditions beyond ramification that are ``invisible" to the previous version of our conjecture, which referenced only factorizations relative to ${\bf k}$.}

\medskip
\emph{Finally, the set ${\bf k}^{\bullet}$ replaces the parameter $m$ of $\sum_{i=1}^{m} \rho(s^{\ast}_i)$ 
in {\it ibid.}, and deserves some explanation. 
For this purpose, we revisit \cite[Example 3.5]{CLM}, in which $g=11$, $n=4$, 
${\bf k}=\{4,8,10,12\}$, and $\sss=\langle 4,10,15\rangle$. The conditions beyond ramification correspond to the light red boxes in the Dyck diagram below}. 

\begin{center}
\includegraphics[scale=0.25]{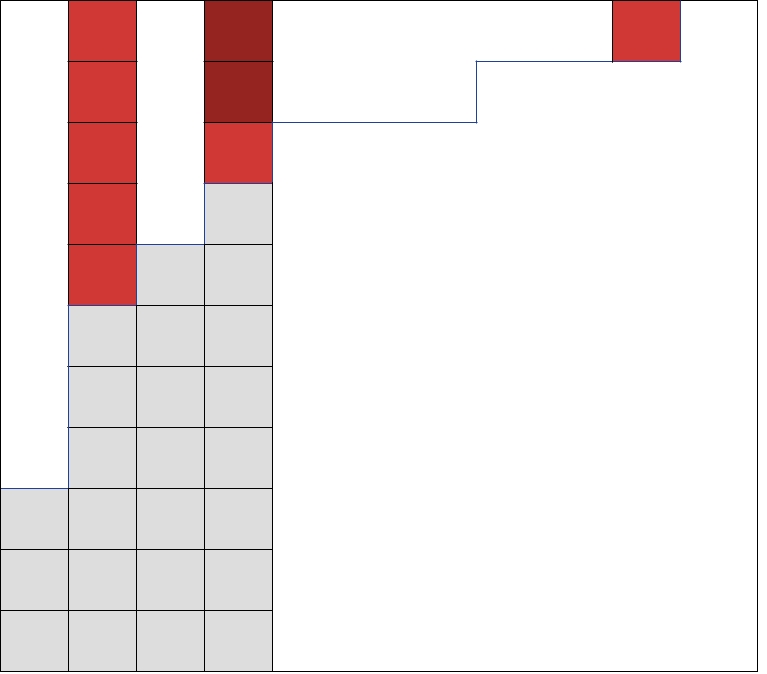} 
\end{center}

\emph{Let $f=(f_1, f_2, f_3, f_4)$ denote a parameterization indexed by an element of the associated Severi stratum. The smallest elements in $B'$ are $8$ and $12$, corresponding to the orders of $f_2$ and $f_4$ (columns $2$ and $4$). We adapt the partition-based notation used previously in the hyperelliptic case as follows. First write $F\in\op$ as in \eqref{extendedsum}; 
and define $F_{\lambda,r}^*$ exactly as (\ref{equf*r}). Algebraic conditions imposed on the coefficients of $f$ are now produced iteratively, 
starting from $F^*_{2,1}$ and $F^*_{4,1}$, reespectively, and are combinatorially witnessed by successively higher and higher light red boxes 
in their respective columns. However, $15\in {\bf k}^*$ (column $6$) ``blocks" both iterative procedures as it is not the order of any monomial in the parameterizing functions $f_i$, $i=1,\dots,4$. As a result, we replace either $F^*_{2,4}$ or $F^*_{4,2}$ by $F^*_{2,4}-a^{2,4}_{15}F^*_{4,2}$ (of order $16$) and proceed with the iterative process, which now combinatorially unfolds along only one of the two columns. This accounts for the dark red boxes at the top of column 4, which index no further algebraically independent conditions, and 
as a result contribute the correction factor $-\rho(15)$ in (\ref{general_cod_estimateBis}). 
On the other hand, an element of $k_i \in \bf{k}^*$ blocks the promotion of an element $b \in B'$ upward in its corresponding column whenever either i) there are fewer elements $k_j$ strictly less than $k_i$ than elements $b \in B'$ or ii) there is at least one $b \in B'$ in the interval $(k_{i-1},k_i)$. This dichotomy accounts for ${\bf k}^{\bullet}$}. 
\end{rem}

\begin{rem}
\emph{Graphically, Betti elements $B=B({\rm S},{\bf k})$ index columns in the Dyck path representation of $({\rm S},{\bf k})$ which conditions beyond ramification may appear.
In the case of the generic hyperelliptic stratum in which ${\rm S}=\langle 2,2g+1 \rangle$ and ${\bf k}=(2,4,\dots, 2n)$, Conjecture~\ref{reformulated_conjecture_bis} specializes to (the dimensionality assertion in) Theorem~\ref{theo2.1}. In general, note that $k_1 \notin B$ whenever $k_1=\text{mult}({\rm S})$; so in such cases, we have $\# (B \cap \{k_1,\dots,k_n\} )\leq n-1$. The generic hyperelliptic stratum thus represents a case that is {\it maximal} with respect to Betti elements; in the following section we will focus on cases that are minimal with respect to $B$.}
\end{rem}

\section{Supersymmetric semigroups}\label{supersymmetric_semigroups}

According to \cite[Thm 12]{GSOR}, supersymmetric semigroups are precisely those numerical semigroups whose sets of Betti elements are singletons; so cases in which ${\rm S}$ is supersymmetric and ${\bf k}$ comprises a set of minimal generators are minimal with respect to $\# B({\rm S},{\bf k})$. In this section, we verify Conjecture~\ref{reformulated_conjecture_bis} when ${\rm S}$ is a supersymmetric semigroup of embedding dimension three, and ${\bf k}$ is its set of minimal generators. Hereafter, we assume $a$, $b$, and $c$ are pairwise relatively prime positive integers. 

\begin{prop} \label{1or3factors}
Let $\sss=\sss(a,b,c)=\langle ab,ac,bc \rangle$ be the supersymmetric semigroup determined by the triple $(a,b,c)$. Every $s\in\sss$ strictly less than the conductor $\mathfrak{c}$ that does not uniquely factor as a nonnegative linear combination of $ab$, $ac$, and $bc$ is of the form
\begin{equation}
\label{equexp}
s=abc+x\cdot ab+y\cdot ac+z\cdot bc
\end{equation}
with $0\leq x\leq c-1$, $0\leq y\leq b-1$ and $0\leq z\leq a-1$. Furthermore, $s$ admits exactly three factorizations, given by $(c+x,y,z)$, $(x,b+y,z)$ and $(x,y,a+z)$.
\end{prop}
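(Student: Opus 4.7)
The plan is a direct modular analysis of factorizations, exploiting the pairwise coprimality of $a,b,c$. The starting observation is that if $(\alpha_1,\beta_1,\gamma_1)$ and $(\alpha_2,\beta_2,\gamma_2)$ are two factorizations of the same $s$, then
\[
(\alpha_1-\alpha_2)\,ab+(\beta_1-\beta_2)\,ac+(\gamma_1-\gamma_2)\,bc=0.
\]
Reducing this identity modulo $c$ and using $\gcd(ab,c)=1$ gives $\alpha_1\equiv\alpha_2\pmod c$, and the symmetric arguments give $\beta_1\equiv\beta_2\pmod b$ and $\gamma_1\equiv\gamma_2\pmod a$. Hence the residues $(x,y,z):=(\alpha\bmod c,\beta\bmod b,\gamma\bmod a)$ depend only on $s$, not on the chosen factorization.

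Writing $\alpha=x+cp$, $\beta=y+bq$, $\gamma=z+ar$ with $p,q,r\in\nn$ and substituting into $s=\alpha\cdot ab+\beta\cdot ac+\gamma\cdot bc$, one obtains
\[
s=x\cdot ab+y\cdot ac+z\cdot bc+(p+q+r)\cdot abc,
\]
so there is a unique $k\in\nn$ with $s=x\cdot ab+y\cdot ac+z\cdot bc+k\cdot abc$, and the factorizations of $s$ are in bijection with $\{(p,q,r)\in\nn^3 : p+q+r=k\}$ via $(\alpha,\beta,\gamma)=(x+cp,\,y+bq,\,z+ar)$. In particular $\#Z_s=\binom{k+2}{2}$, so non-unique factorization is equivalent to $k\geq 1$.

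To pin down $k=1$ under the hypothesis $s<\ccc$, I would invoke the classical Frobenius formula $F(\sss)=2abc-ab-ac-bc$ for the complete-intersection semigroup $\langle ab,ac,bc\rangle$, which gives $\ccc=F+1\leq 2abc$. If instead $k\geq 2$, then $s\geq 2abc\geq\ccc$, contradicting $s<\ccc$. Thus $k=1$, which yields the canonical form $s=abc+x\cdot ab+y\cdot ac+z\cdot bc$ with $0\leq x\leq c-1$, $0\leq y\leq b-1$, $0\leq z\leq a-1$; the three factorizations $(c+x,y,z)$, $(x,b+y,z)$, $(x,y,a+z)$ correspond to the three solutions $(1,0,0),(0,1,0),(0,0,1)$ of $p+q+r=1$. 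The main point of care is the modular invariance of $(x,y,z)$ and the associated bijection; once those are in hand, the conductor bound is immediate from the Frobenius formula, and the rest is bookkeeping.
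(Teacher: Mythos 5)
Your proof is correct, and it takes a genuinely different (and more self-contained) route than the paper's. The paper works directly with an arbitrary factorization $(x',y',z')$: from $s<\ccc$ and the Frobenius number $2abc-(ab+ac+bc)$ it derives $\frac{x'+1}{c}+\frac{y'+1}{b}+\frac{z'+1}{a}<2$, then \emph{asserts} that non-uniqueness forces $x'\geq c$, $y'\geq b$, or $z'\geq a$, reads off the coordinate bounds from the inequality, and finally quotes without proof that $(x,y,z)$ is the unique factorization of $s-abc$ in order to conclude ``exactly three.'' Your modular analysis supplies precisely the two facts the paper leaves implicit: since $\alpha\bmod c$, $\beta\bmod b$, $\gamma\bmod a$ are invariants of $s$ (by pairwise coprimality applied to the kernel relation), the fiber $Z_s$ is in bijection with $\{(p,q,r)\in\nn^3 : p+q+r=k\}$, so elements whose factorization lies in the box $[0,c-1]\times[0,b-1]\times[0,a-1]$ factor uniquely, and non-unique factorization is exactly the condition $k\geq 1$. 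You also prove strictly more than the statement requires, namely the exact count $\#Z_s=\binom{k+2}{2}$ for every $s\in\sss$, with the hypothesis $s<\ccc$ entering only at the end to rule out $k\geq 2$ via $2abc\geq\ccc$. Both arguments rest on the same Frobenius formula $F(\sss)=2abc-ab-ac-bc$ (standard for these complete-intersection semigroups, and also assumed by the paper), but your kernel-lattice parameterization makes the ``exactly three factorizations'' conclusion automatic rather than asserted, at the modest cost of setting up the bijection; the paper's inequality bookkeeping is shorter but leans on the uniqueness of reduced factorizations as known background.
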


\begin{proof}
Suppose that $s=x'\cdot ab+y'\cdot ac+z'\cdot bc$. The Frobenius number of $\sss$ is $2abc - (ab+ac+bc)$; it follows that
\begin{equation}\label{equfrc}   
\frac{x'+1}{c}+\frac{y'+1}{b}+\frac{z'+1}{a} < 2. 
\end{equation}
Since $s$ has more than one factorization, we have either $x'\geq c$, $y'\geq b$, or $z'\geq a$. Suppose, without loss of generality, that $x^{\pr} \geq c$; then \eqref{equfrc} forces $x'<2c$, $0\leq y'\leq b-1$ and $0\leq z'\leq a-1$. Letting $x'=c+x$ with $0\leq x\leq c-1$, $y'=y$, and $z'=z$ yields \eqref{equexp}. As $(x,y,z)$ is the unique factorization of $s-abc$, it follows that $s$ has precisely the three factorizations claimed in the statement of the proposition.
\end{proof}

\begin{thm} \label{Supersymmetric}
Let $\mathcal{V}_{\bf k} \subset M^3_{d,g; {\rm S}}$ be the subvariety of maps $f: \mb{P}^1 \rightarrow \mb{P}^3$ with a unique singularity that is cuspidal with semigroup ${\rm S}=\langle ab,ac,bc \rangle$ and ramification profile ${\bf k}=(ab,ac,bc)$, where $a$, $b$, and $c$ are pairwise relatively prime positive integers. 
Assume $d=\deg(f) \geq 
2g$; then
\begin{equation}\label{general_cod_estimate_abc}
{\rm cod}(\mc{V}_{\bf k},M^3_d) 
=2\rho(abc)+ab+ac+bc-7
\end{equation}
and $\mc{V}_{\bf k}$ is unirational.
\end{thm}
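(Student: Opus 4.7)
The plan is to adapt the iterative cancellation strategy from the proof of Theorem~\ref{theo2.1} to the supersymmetric setting, leveraging Proposition~\ref{1or3factors}. Since the ramification contribution is $r_P = \sum_{i=1}^3(k_i - i) = ab + ac + bc - 6$, the identity $\mathrm{cod}_{[f]}(\mc{V}_{\bf k}, M^3_d) = r_P + b_P - 1$ reduces the dimensionality claim to showing that the number $b_P$ of conditions beyond ramification equals $2\rho(abc)$.

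The key structural observation is that Proposition~\ref{1or3factors} classifies every element of $\sss$ below the conductor $\mathfrak{c}$ as either unifactorial or trifactorial, with every trifactorial element of the form $s = abc + x \cdot ab + y \cdot ac + z \cdot bc$ admitting factorizations $(c+x, y, z)$, $(x, b+y, z)$, $(x, y, a+z)$. Writing a general $F \in \op$ as an ${\bf N}$-linear combination of monomials $f_1^x f_2^y f_3^z$ of valuation less than $2g$, and iterating through the non-gaps of $\sss$ in increasing order, at each unifactorial $s$ one cancels the leading term using the sole monomial of order $s$, while at each trifactorial $s$ one cancels using one of the three monomials, with the remaining two producing auxiliary functions of strictly higher valuation. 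Crucially, these auxiliaries equal $f_1^x f_2^y f_3^z \cdot G_1$ and $f_1^x f_2^y f_3^z \cdot G_2$, where
\[
G_1 := f_1^c - f_2^b \quad \text{and} \quad G_2 := f_1^c - f_3^a
\]
are the ``master functions'' associated with the unique Betti element $abc$; hence every auxiliary that arises throughout the iteration is traceable to the two iterative sequences starting from $G_1$ and $G_2$.

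The main work consists of proving a supersymmetric analogue of Lemma~\ref{lemma2.2}: the successive modifications $G_i^{*,r}$ of $G_i$ are ${\bf C}[f_1, f_2, f_3]$-linear combinations of the iterates $G_1^{*, r'}, G_2^{*, r'}$ with $r' \leq r$, together with monomials in the $f_j$. From this it follows, as in Remark~\ref{rem2.4}, that each of $G_1^{*, r}$ and $G_2^{*, r}$ contributes one algebraically independent condition precisely when its leading term would otherwise land at a gap $\ell > abc$ of $\sss$, while auxiliaries attached to trifactorial orders $s > abc$ produce only conditions dependent on those already obtained. Summing over the $\rho(abc)$ gaps above $abc$ yields $b_P = 2\rho(abc)$, whence \eqref{general_cod_estimate_abc} follows. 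The principal obstacle is the combinatorial bookkeeping required to verify these syzygy relations among the auxiliaries attached to higher trifactorial orders.

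Unirationality follows by the mechanism of Corollary~\ref{hyperelliptic_severi_unirationality}: at each step of the iteration, the extracted condition is affine linear in a distinguished ``top'' coefficient $a_{i, \ell}$, with remaining terms polynomial in previously-resolved coefficients. Solving iteratively for these top coefficients realizes $\mc{V}_{\bf k}$ as the image of a dominant rational map from affine space, establishing unirationality.
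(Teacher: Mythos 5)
Your proposal is correct and follows essentially the same route as the paper: reduce to showing $b_P = 2\rho(abc)$, run the order-by-order cancellation anchored at the unique Betti element $abc$, observe that every auxiliary arising at a trifactorial order is a monomial multiple of the two master differences (your $G_1,G_2$; the paper implicitly works with $f_1^c-f_3^a$ and $f_2^b-f_3^a$ after imposing $\gamma_1=-\alpha_1-\beta_1$), extract two algebraically independent conditions with linear leading terms per gap above $abc$, and deduce unirationality from that same linearity. The ``combinatorial bookkeeping'' you flag as the principal obstacle is exactly what the paper carries out via the shift identities $C^{\alpha_i}_{j}=C^{\alpha_1}_{j-j_{i-1}}$ and $C^{\beta_i}_{j}=C^{\beta_1}_{j-j_{i-1}}$, which follow from the multiplicative structure of the unifactorial monomials (e.g.\ $h_1\,g_1=h_{j_1+1}$, giving $d_{j_1+1,1}=d_{1,1}+e_{1,1}$).
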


\begin{proof} 

Starting from a parameterization
$
f=(f_0:f_1:f_2:f_3)
$, abusively replace $f_i$ by its local power series representation $f_i/f_0$; 
and let $t$ denote a uniformizer for the local ring $\op$ in $P$.
Any $F \in \op$ eligible to produce a condition decomposes as a sum of monomials in the parameterizing functions $f_i$, each of which we may assume has $t$-valuation greater than or equal to the unique Betti element $abc$, and strictly less than the conductor of ${\rm S}$, so in particular strictly less than $2g$. Collecting monomials in the $f_i$ with common $t$-adic valuations, the corresponding ``universal polynomial" $F$ in $f_1$, $f_2$, and $f_3$ is a sum of monomials indexed by the elements of ${\rm S}$ of the following shape: 
\begin{multicols}{2}
\begin{minipage}{4cm}
\begin{equation*} \label{extendedsum2}
\begin{split}
\text{ Elements of \sss} \\
abc:\\
abc+1: \\
\vdots \ \ \ \ \\
abc+\ell_1-1:\\
abc+\ell_1: \\ 
abc+\ell_1+1:\\
\vdots  \ \ \ \ \\ 
abc+j_1-1: \\
abc+j_1: \\
abc+j_1+1: \\
\vdots \ \ \ \ 
\end{split}
\end{equation*}

\end{minipage}
\begin{minipage}{4cm}
\flushright
\begin{equation*} \label{extendedsum2}
\begin{split}
&\text{ Monomials of $F$} \\
&\alpha_1\,f_1^c+\beta_{1}\,f_2^{b}+\gamma_{1}\,f_3^a \\
&+\sigma_{1}\,h_{1} \\
&\,\, \ \ \ \ \vdots\\
&+\sigma_{\ell_1-1}\,h_{\ell_1-1} \\
&{\text   (gap)}\\ 
&+\sigma_{\ell_1+1}\,h_{\ell_1+1} \\
&\,\, \ \ \ \ \vdots\\ 
&+\sigma_{j_1-1}\,h_{j_1-1}  \\
&+\alpha_2 \,g_{1}\, f_{1}^c+\beta_{2} \,g_{1}\,f_2^{b} + \gamma_{2}\, g_{1}\, f_3^{a} \\ 
&+\sigma_{j_1+1}\,h_{j_1+1}  \\
&\,\, \ \ \ \ \vdots\\ 
\end{split}
\end{equation*}
\end{minipage}
\end{multicols}

Monomials in the first line have $t$-adic valuation $abc$. To usefully describe monomials in the remaining lines, we invoke Proposition~\ref{1or3factors}, and let $h_i$ (resp., $g_i$) denote a monomial with valuation $abc+i$ (resp., $j_i$) for which $abc+i$ (resp., $abc+j_i$) factors uniquely (resp., that admits 3 distinct factorizations); 
and we label those gaps of ${\rm S}$ strictly greater than $abc$ in increasing order by $abc+\ell_i$, $i \geq 1$. We then have 
\[
\begin{split}
f_1^c&:= t^{abc} + \sum\limits_{m=1}^{\infty} a_m \, t^{abc+m}, 
f_2^b:= t^{abc} + \sum\limits_{m=1}^{\infty} b_m \, t^{abc+m},
f_3^a:= t^{abc} + \sum\limits_{m=1}^{\infty} c_m \, t^{abc+m};\\ \text{ and} \\
h_i&:= t^{abc+i} + \sum\limits_{m=1}^{\infty} d_{i,m} \, t^{abc+i+m} \text{ and }
g_i:= t^{j_i} + \sum\limits_{m=1}^{\infty} e_{i,m} \, t^{j_i+m} 
\end{split}
\]
for suitable choices of coefficients $a_m$, $b_m$, $c_m$, $d_{i,m}$ and $e_{i,m}$.


\medskip
Following the basic protocol established in the proof of Theorem~\ref{theo2.1}, we now systematically examine the terms of $F$ order by order, alternating between imposing vanishing on coefficients in orders corresponding to elements of ${\rm S}$, and deducing polynomial (vanishing) conditions imposed by terms whose orders are gaps (i.e., elements of the complement $\mb{N} \setminus {\rm S}$). Beginning with terms of valuation $abc$, we have
\[
F = (\alpha_1+\beta_1+\gamma_1) t^{abc}+ (\text{higher-order terms});
\]
accordingly, we impose that 
\begin{equation}\label{Step1}
\gamma_1=-\alpha_1-\beta_1.
\end{equation}

The coefficient of $t^{abc+1}$ in $F$, on the other hand, is
\begin{equation}\label{CoefStep1-2}
[t^{abc+1}]F=\alpha_1\,a_1+\beta_1\,b_1+\gamma_1\,c_1+\sigma_1=
\alpha_1\,(a_1-c_1)+\beta_1\,(b_1-c_1)+\sigma_1
\end{equation}
in which we have invoked the condition \eqref{Step1} we imposed on the terms of valuation $abc$ in the first step. Accordingly, to move on to terms of higher order, we impose
\[
\sigma_1=-\alpha_1(a_1-c_1)-\beta_1(b_1-c_1).
\]

Note that at every step $i$ for which $abc+i \in \sss$ factors uniquely as a nonnegative linear combination of $ab$, $ac$, and $bc$, our process replaces the coefficient $\sigma_i$ to access step $i+1$, while $\alpha_1$ and $\beta_1$ remain free parameters. 
For example, the coefficient of $t^{abc+2}$ in $F$ is
\[
\begin{split}
[t^{abc+2}]F &=\alpha_1\,a_2+\beta_1\,b_2+\gamma_1\,c_2+\sigma_1\,d_{1,1}+\sigma_2 \\
&=\alpha_1\,(a_2-c_2-d_{1,1}(a_1-c_1))+\beta_1\,(b_2-c_2-d_{1,1}(b_1-c_1))+\sigma_2
\end{split}
\]
in which we have invoked the vanishing of the terms of valuation $abc$ and $abc+1$ in the second line. 
Moving on to terms of higher order, we impose
\[
\sigma_2=-\alpha_1 (a_2-c_2-d_{1,1}(a_1-c_1))-\beta_1 (b_2-c_2-d_{1,1}(b_1-c_1)).
\]

Iterating our procedure, we find that
\[
\begin{split}
[t^{abc+\ell_1-1}]F &= \alpha_1 a_{\ell_1-1}+\beta_1 b_{\ell_1-1}+\gamma_1 c_{\ell_1-1}+ \sum\limits_{i=1}^{\ell_1-2} \sigma_i \, d_{i,\, \ell_1-1-i}+\sigma_{\ell_1-1} \\
&=\alpha_1 \cdot C_{\ell_1-1}^{\alpha_1}+\beta_1 \cdot C_{\ell_1-1}^{\beta_1}+\sigma_{\ell_1-1} 
\end{split}
\]
in which each $C_{i}^j$ is a polynomial in the coefficients of $f_1, f_2$ and $f_3$ that appear in step $i$. 
In particular, at step $\ell_1$, corresponding to the gap $abc+\ell_1$,  we obtain 
\[
[t^{abc+\ell_1}]F=\alpha_1 \cdot C_{\ell_1}^{\alpha_1}+\beta_1 \cdot C_{\ell_1}^{\beta_1} 
\]
and accordingly the first two conditions beyond ramification are
$C_{\ell_1}^{\alpha_1}=0$ and $C_{\ell_1}^{\beta_1}=0$. The upshot is that there are two conditions beyond ramification imposed by each gap $abc+\ell_i$ for $\ell_i<j_1$, and these conditions are 
\begin{equation}
\label{equind}
C_{\ell_i}^{\alpha_1}=a_{\ell_i}-c_{\ell_i}-\sum_{k=1}^{\ell_i-1} d_{k,\ell_i-k} C_{k}^{\alpha_1}\ \ \ \ \text{and}\ \ \ \ C_{\ell_i}^{\beta_1}=b_{\ell_i}-c_{\ell_i}-\sum_{k=1}^{\ell_i-1} d_{k,\ell_i-k} C_{k}^{\beta_1}
\end{equation}
where $ C_{k}^{\alpha_1}=C_{k}^{\beta_1}=0$ for $k=\ell_j<\ell_i$. Explicitly, as a sum of the $(a_k-c_k)$'s we may also write \begin{equation}
\label{equind2}
C_{\ell_i}^{\alpha_1}=\sum_{k=1}^{\ell_i} \nu_{k,\ell_i}(a_k-c_k)\ \ \ \ \text{and}\ \ \ \ C_{\ell_i}^{\beta_1}=\sum_{k=1}^{\ell_i} \nu_{k,\ell_i}(b_k-c_k)
\end{equation}
where $\nu_{\ell_i,\ell_i}=1$, $\nu_{\ell_j,\ell_i}=0$ for $\ell_j<\ell_i$, and recursively
\[
\nu_{k,\ell_i}=\sum_{m=0}^{\ell_i-k-1}-d_{k+m,\ell_i-k-m}\,\nu_{k,k+m}.
\]

Hereafter, at each step indexed by a single gap or by an element of S that factors uniquely, our procedure will exhibit analogous behavior. A new phenomenon occurs 
at step $j_1$, 
indexed by $abc+j_1 \in {\rm S}$, which admits three factorizations. 
More precisely, in step $j_1-1$ we have
\[
\begin{split}
[t^{abc+j_1-1}]F &= \alpha_1 \cdot C_{j_1-1}^{\alpha_1}+\beta_1 \cdot C_{j_1-1}^{\beta_1}+\sigma_{j_1-1} 
\end{split}
\]
while in step $j_1$ 
\[
\begin{split}
[t^{abc+j_1}]F &= \alpha_1\,a_{j_1}+\beta_1\,b_{j_1}+\gamma_1\,c_{j_1}+ \sum\limits_{\substack{i=1 \\i\neq \ell_k}}^{j_1-1} \sigma_i \, d_{i,\, j_1-i} + \alpha_2+\beta_2+\gamma_2
\end{split}
\]
which, upon substituting for $\gamma_1$ and the $\sigma_{i}$ as our procedure dictates, becomes
\[
\begin{split}
[t^{abc+j_1}]F &= \alpha_1\,C_{j_1}^{\alpha_1}+\beta_1\,C_{j_1}^{\beta_1}+ \alpha_2+\beta_2+\gamma_2.
\end{split}
\]
In step $j_1+1$, we have
{\small
\[
\begin{split}
[t^{abc+j_1+1}]F &= \alpha_1\,a_{j_1+1}+\beta_1\,b_{j_1+1}+\gamma_1\,c_{j_1+1}+ \sum\limits_{\substack{i=1 \\i\neq \ell_k}}^{j_1-1} \sigma_i \, d_{i,\, j_1-i+1} \\ 
&+ \alpha_2\,(a_1+e_{1,1})+\beta_2\,(b_1+e_{1,1})+\gamma_2\,(c_1+e_{1,1})+\sigma_{j_1+1}.
\end{split}
\]
}
\medskip
Imposing $\gamma_2=-\alpha_1\,C_{j_1}^{\alpha_1}-\beta_1\,C_{j_1}^{\beta_1}-\alpha_2-\beta_2$, and substituting as usual for $\gamma_1$ and $\sigma_i$, $i=1,\dots,j_1-1$, we deduce that 
\medskip
\begin{equation}\label{CoefStep(j_1+1)}
\begin{split}
[t^{abc+j_1+1}]F &= \alpha_1\,C_{j_1+1}^{\alpha_1}+\beta_1\,C_{j_1+1}^{\beta_1}+ \alpha_2\,C_{j_1+1}^{\alpha_2}+\beta_2\,C_{j_1+1}^{\beta_2}+\sigma_{j_1+1}
\end{split}
\end{equation}
%
\medskip
in which $C_{j_1+1}^{\alpha_2}=a_1-c_1$ and $C_{j_1+1}^{\beta_2}=b_1-c_1$.

\medskip

The polynomial structure of \eqref{CoefStep(j_1+1)} reproduces that of \eqref{CoefStep1-2} in step 1; in particular, we have $C^{\al_2}_{j_1+1}=C^{\al_1}_1$ and $C^{\be_2}_{j_1+1}=C^{\be_1}_1$. 
Similarly, in step $j_1+2$, we have
\[
\begin{split}
[t^{abc+j_1+2}]F &= \alpha_1\,a_{j_1+2}+\beta_1\,b_{j_1+2}+\gamma_1\,c_{j_1+2}+ \sum\limits_{\substack{i=1 \\i\neq \ell_k}}^{j_1-1} \sigma_i \, d_{i,\, j_1-i+2} \\
&+ \alpha_2\,(a_2+a_1\,e_{1,1}+e_{1,2})+\beta_2\,(b_2+b_1\,e_{1,1}+e_{1,2})+\gamma_2\,(c_2+c_1\,e_{1,1}+e_{1,2}) \\
&+\sigma_{j_1+1}\,d_{j_1+1,1}+\sigma_{j_1+2}
\end{split}
\] which, upon substituting for $\gamma_1$, $\gamma_2$ and the $\sigma_{i}$ as our method requires, becomes
\[
\begin{split}
[t^{abc+j_1+2}]F &= \alpha_1\,C_{j_1+2}^{\alpha_1}+\beta_1\,C_{j_1+2}^{\beta_1}+ \alpha_2\,C_{j_1+2}^{\alpha_2}+\beta_2\,C_{j_1+2}^{\beta_2}+\sigma_{j_1+2}
\end{split}
\]
where 
\[
C_{j_1+2}^{\alpha_2}=(a_2-c_2-(e_{1,1}-d_{j_1+1,1})(a_1-c_1)) \text{ and }
C_{j_1+2}^{\beta_2}=(b_2-c_2-(e_{1,1}-d_{j_1+1,1})(b_1-c_1)).
\]

\medskip
On the other hand, because 
$h_1\,g_1$ is of order $abc+j_1+1$ (and the coefficient of $t^{abc+j_1+1}$ in its power series expansion is 1), we have $h_1\,g_1=h_{j_1+1}$, which implies that $d_{j_1+1,1}=d_{1,1}+e_{1,1}$. It follows that
\[
C_{j_1+2}^{\alpha_2}=a_2-c_2-d_{1,1}(a_1-c_1)=C_{2}^{\alpha_1} \text{ and }
C_{j_1+2}^{\beta_2}=b_2-c_2-d_{1,1}(b_1-c_1)=C_{2}^{\beta_1}.
\]

Iterating the argument yields $C^{\al_2}_{j_1+i}=C^{\al_1}_{i}$ and $C^{\be_2}_{j_1+i}=C^{\be_1}_{i}$ for every $i$.

\medskip
Note that every gap greater than $abc+j_1$ must be of the form $abc+j_1+\ell_i$, where $\ell_i$ is a gap (though the converse is not true in general). 
Now suppose that $abc+j_1+\ell_i$ is a gap. We then have 
\[
\begin{split}
[t^{abc+j_1+\ell_i}]F &= \alpha_1\,C_{j_1+\ell_i}^{\alpha_1}+\beta_1\,C_{j_1+\ell_i}^{\beta_1}+ \alpha_2\,C_{j_1+\ell_i}^{\alpha_2}+\beta_2\,C_{j_1+\ell_i}^{\beta_2}\\
&= \alpha_1\,C_{j_1+\ell_i}^{\alpha_1}+\beta_1\,C_{j_1+\ell_i}^{\beta_1}+ \alpha_2\,C_{\ell_i}^{\alpha_1}+\beta_2\,C_{\ell_i}^{\beta_1}
\end{split}
\]
which yields precisely two new conditions, $C_{j_1+\ell_i}^{\alpha_1}=0$ and $C_{j_1+\ell_i}=0$, as $C_{\ell_i}^{\alpha_1}$ and $C_{\ell_i}^{\beta_1}$ already vanish.
On the other hand, whenever $abc+j_1+\ell_i$ belongs to ${\rm S}$, we have
\[
\begin{split}
[t^{abc+j_1+\ell_i}]F &= \alpha_1\,C_{j_1+\ell_i}^{\alpha_1}+\beta_1\,C_{j_1+\ell_i}^{\beta_1}+ \alpha_2\,C_{j_1+\ell_i}^{\alpha_2}+\beta_2\,C_{j_1+\ell_i}^{\beta_2}+\sigma_{j_1+\ell_i}\\
&= \alpha_1\,C_{j_1+\ell_i}^{\alpha_1}+\beta_1\,C_{j_1+\ell_i}^{\beta_1}+ \alpha_2\,C_{\ell_i}^{\alpha_1}+\beta_2\,C_{\ell_i}^{\beta_1}+\sigma_{j_1+\ell_i}\\
&= \alpha_1\,C_{j_1+\ell_i}^{\alpha_1}+\beta_1\,C_{j_1+\ell_i}^{\beta_1}+\sigma_{j_1+\ell_i}.
\end{split}
\]
It then follows that
\[
\begin{split}
[t^{abc+j_1+\ell_i+1}]F &= \alpha_1\,C_{j_1+\ell_i+1}^{\alpha_1}+\beta_1\,C_{j_1+\ell_i+1}^{\beta_1}+ \alpha_2\,C_{\ell_i+1}^{\alpha_1}+\beta_2\,C_{\ell_i+1}^{\beta_1}+\sigma_{j_1+\ell_i+1}
\end{split}
\]
and so on. At the conclusion of step $j$, the upshot is that 
\[
[t^{abc+j}]F = \alpha_1\,C_{j}^{\alpha_1}+\beta_1\,C_{j}^{\beta_1}+x
\]
where
\[x=\begin{cases}0 \text{ whenever $abc+j$ is a gap;} \\
\sum\limits_{i=2}^{k} (\alpha_i\, C_{j}^{\alpha_i}  +\beta_i \, C_{j}^{\beta_i} ) + \alpha_{k+1}+\beta_{k+1}+\gamma_{k+1} \text{ whenever $j=j_k$ for some $k$; and} \\
\sum\limits_{i=2}^{k+1} (\alpha_i\, C_{j}^{\alpha_i}  +\beta_i \, C_{j}^{\beta_i} ) + \sigma_j   \text{ whenever $j_k<j<j_{k+1}$ and $j$ is not a gap}.
\end{cases}\]
\medskip
In all cases, we have $ C_{j}^{\alpha_i}=  C_{j-j_{i-1}}^{\alpha_1}$ and $C_{j}^{\beta_i}=C_{j-j_{i-1}}^{\beta_1}$; pushing the same argument further shows that each gap produces exactly two algebraically independent conditions.  
Indeed, the key fact is that the expansions of $C_{\ell_i}^{\alpha_1}$ and $C_{\ell_i}^{\beta_1}$ include linear summands $(a_{\ell_i}-c_{\ell_i})$ and $(b_{\ell_i}-c_{\ell_i})$ respectively (see \ref{equind}), which are algebraically independent. The same linearity property implies that the Severi stratum $\mc{V}_{(ab,ac,bc)}$ is unirational. 
\end{proof}

\end{document}